\edef\cdrestoreat{
\noexpand\catcode\lq\noexpand\@=\the\catcode\lq\@}\catcode\lq\@=11
\renewcommand\section{\@startsection {section}{1}{\z@}%
          {-3.25ex\@plus -1ex \@minus -.2ex}%
          {1.5ex \@plus .2ex}%
          {\normalfont\large\bfseries}}
\renewcommand\subsection{\@startsection{subsection}{2}{\z@}%
          {-3.25ex\@plus -1ex \@minus -.2ex}%
          {1.5ex \@plus .2ex}%
          {\normalfont\normalsize\bfseries}}
\mathchardef\colon="303A 
\mathchardef\gt="313E  
\mathchardef\lt="313C  
\let\phi\varphi
\DeclareFontFamily{OT1}{pzc}{}
\DeclareFontShape{OT1}{pzc}{m}{it}{<->s*[1.14]pzcmi7t}{}
\DeclareMathAlphabet{\mathpzc}{OT1}{pzc}{m}{it}
\newtheoremstyle{teorema}{\topsep}{\topsep}
{\slshape}{}{\bf}{{\normalfont.}}{.5em}{}
\newtheoremstyle{definizione}{\topsep}{\topsep}
{\normalfont}{}{\bf}{{\normalfont.}}{.5em}{}
\def\nameit#1{#1~}
\def\thx{\nameit{Theorem}}
\def\lmx{\nameit{Lemma}}
\def\prx{\nameit{Proposition}}
\def\crx{\nameit{Corollary}}
\def\dfx{\nameit{Definition}}
\def\ntx{\nameit{Notation}}
\def\rmx{\nameit{Remark}}
\def\rxx{\nameit{Example}}
\def\xmx{\nameit{Example}}
\def\xsx{\nameit{Examples}}
\theoremstyle{teorema}
\newtheorem{thm}{Theorem}[section]
\newtheorem{lem}[thm]{Lemma}
\newtheorem{prp}[thm]{Proposition}
\newtheorem{cor}[thm]{Corollary}
\theoremstyle{definizione}
\newtheorem{dfnt}[thm]{Definition}
\newtheorem{rem}[thm]{Remark}
\newtheorem{nota}[thm]{Notation}
\newtheorem{rxm}[thm]{Examples}
\def\dfn#1{{\bfseries\itshape #1\/}}
\def\ie{{\textit{i.e.}}\xspace}
\def\id#1{\ensuremath{\mathrm{id}_{#1}}}
\def\Id#1{\ensuremath{\mathrm{Id}_{#1}}}
\def\twr{^{\mathbf{2}}}
\def\ct#1{\ensuremath{\mathpzc{#1}}\xspace} 
\def\CT#1{\ensuremath{{\normalfont\textrm{\bfseries #1}}}\xspace}
\def\Pred#1{{\ensuremath{{\mathpzc{Prd}\kern-.4ex_{{#1}}}}}}
\def\FTR#1{\ensuremath{\mathsf{#1}}}
\def\xynatvr{\ar|>{}|-{{\kern1.2ex\scalebox{.3}{$\bullet$}}}}
\def\iso{\scalebox{.8}{$\sim$}}
\def\RB#1{\mathchoice
  {\rotatebox[origin=c]{180}{$#1$}}
  {\rotatebox[origin=c]{180}{$#1$}}
  {\rotatebox[origin=c]{180}{$\scriptstyle#1$}}
  {\rotatebox[origin=c]{180}{$\scriptscriptstyle#1$}}}
\def\DQ{\RB{E}\kern-.3ex}
\def\DX[#1]#2#3{\ensuremath{\DQ_{#1,#2}^{#3}}}
\def\DL#1#2{\ensuremath{\DQ_{#1}^{#2}}}
\def\D{\@ifnextchar[{\DX}{\DL}}
\def\BQ{\RB{A}\kern-.6ex}
\def\BX[#1]#2#3{\ensuremath{\BQ_{#1,#2}^{#3}}}
\def\BL#1#2{\ensuremath{\BQ_{#1}^{#2}}}
\def\B{\@ifnextchar[{\BX}{\BL}}
\def\Gr#1{\ensuremath{\mathop{{}\ct{G}\kern-.5ex}({#1})}}
\def\cmp#1{\ensuremath{\{\kern-2.5pt|{#1}|\kern-2.5pt\}}}
\def\Set{\ct{Set}}
\def\Cat{\ct{Cat}}
\let\land\wedge
\def\Implies{\ensuremath{\Rightarrow}}
\def\Iff{\ensuremath{\Leftrightarrow}}
\let\tt\top
\def\Gpd{\ct{Gpd}}
\def\domF{\ensuremath{\FTR{Pr}_1}\xspace}
\def\dom{\FTR{dom}\xspace}
\def\cod{\FTR{cod}\xspace}
\def\idd#1{\ensuremath{\FTR{id}_{#1}}}
\def\1{\textbf{1}}
\def\Fam{\CT{Fam}}
\def\FamS{\Fam(\Set)}
\def\FamC{\Fam(\ct{C})}
\def\fam#1#2#3{\ensuremath{(#1,(#3_{#2})_{#2 \in #1})}\xspace}
\def\famI{\fam{I}{i}{A}}
\def\famJ{\fam{I'}{j}{A'}}
\def\larr{\mathrel{\xymatrix@1@=3.5ex{*=0{}\ar[];[r]&*=0{}}}}
\def\to{\mathrel{\xymatrix@1@=2.5ex{*=0{}\ar[];[r]&*=0{}}}}
\def\tnat{\stackrel{\smash{\raisebox{-.2ex}{.\kern.5ex}}}\to}
\def\isoa{\stackrel{\smash{\raisebox{-.2ex}{\iso\kern.5ex}}}\to}
\def\xyisoa{\stackrel{\smash{\raisebox{-.2ex}{\iso\kern.5ex}}}
 \longrightarrow}
\let\ftr\larr
\def\commentmark#1{\makebox[0pt][l]
{\color{#1}\kern-.3ex\textbullet}}
\def\Map#1{\ensuremath{\ct{EF}\kern-.4ex_{#1}}}
\def\DMap#1{\ensuremath{\ct{M}\kern-.4ex_{#1}}}
\def\xycar{\ar@{-)}}
\def\car{\mathrel{\xymatrix@C=2.5ex{ *=0{} \xycar[r] & *=0{} }}}
\def\xycocar{\ar@{-+}}
\def\cocar{\mathrel{\xymatrix@C=2ex{ *=0{} \xycocar[r] & *=0{} }}}
\def\ridx#1#2{#1{}^*#2} 
\def\pr#1{\ensuremath{{\mathrm{pr}}_{#1}}} 
\def\carl#1{^{\Rsh\!#1}} 
\def\ladjD#1{\ensuremath{\mathchoice
 {\rotatebox[origin=c]{180}{E}_{#1}}
 {\rotatebox[origin=c]{180}{E}_{#1}}
 {\rotatebox[origin=c]{180}{\scriptsize E}_{\scriptscriptstyle #1}}
 {\relax}}}
\def\cocaDp#1#2{\ensuremath{\cocaD{#2}^{#1}}} 
\def\cocaD#1{\ensuremath{\delta_{#1}}} 
\def\cocD#1{\ensuremath{\partial_{#1}}} 
\def\eqobj#1{\ensuremath{\mathrm{I}_{#1}}} 
\def\desdg#1#2#3{(\ridx{#3} #2) \land \eqobj{#1}} 
\def\desd#1#2{\desdg{#1}{#2}{\pr 1}} 
\def\desdgp#1#2#3#4{\ensuremath{(\ridx{#4} #2) \land (\ridx{#3}{\eqobj{#1}})}} 
\def\desdp#1#2{\desdgp{#1}{#2}{\pr{2,3}}{\pr{1,2}}} 
\DeclareMathOperator{\glprd}{\resizebox{1.4ex}{!}{$\boxtimes$}} 
\def\glpair<#1,#2>{<\!| #1, #2 |\!>}
\def\xeqobj#1#2{\ensuremath{\eqobj #1 \glprd \eqobj #2}} 
\def\vrt#1{\ensuremath{\widehat{#1}}} 
\def\trmf#1{\ensuremath{\top_{\!\!#1}}} 
\def\trspp#1#2{\ensuremath{\mathrm{t}^{#1}_{#2}}} 
\def\trsp#1{\trspp{}{#1}} 
\def\gdiagcl{\ensuremath{\Delta}\xspace}
\def\BCFRcl#1#2{\ensuremath{\Xi_{#1}^{#2}}}
\def\cocaDpcl#1{\ensuremath{\BCFRcl{\pr{1,1}}{\pr 1}(\cocD{#1})}}
\def\cocaCl{\ensuremath{\Lambda}\xspace}
\def\iff{if and only if\xspace}
\def\testa{}
\def\fibepi#1{\relax\def\testx{#1}\relax
locally epi\ifx\testa\testx\relax\else{} with respect to \ensuremath{#1}\fi\relax}
\def\fibepic#1{\relax\def\testx{#1}\relax
locally epic\ifx\testa\testx\relax\else{} with respect to \ensuremath{#1}\fi\relax}
\def\FRar{pairable\xspace}
\def\BCarL{product-stable\xspace}
\def\BCar{\BCarL}
\def\FRop{split pairing\xspace}
\def\BCop{parametrised reindexing\xspace}
\def\arrcl#1{\ct{#1}}
\def\fibrcl{\arrcl{F}}
\def\weqcl{\arrcl{W}}
\def\leftcl{\arrcl{L}}
\def\rightcl{\arrcl{R}}
\def\rst#1{\mathbin{\restriction}_{#1}}
\def\transporter{transporter\xspace}
\def\ple#1{\ensuremath{\left\langle #1 \right\rangle}}
\def\ple#1{\ensuremath{\langle #1 \rangle}}
\def\refl#1{\ensuremath{\mathrm{r}_{#1}}}
\def\awfs{algebraic weak factorisation system\xspace}
\def\awfsR{\FTR{R}\xspace}
\def\awfsL{\FTR{L}\xspace}
\def\awfsM{\FTR{M}\xspace}
\def\awfsR{\ensuremath{R}\xspace}
\def\awfsL{\ensuremath{L}\xspace}
\def\awfsM{\ensuremath{M}\xspace}
\def\awfsRc{\FTR{R}\xspace}
\def\awfsLc{\FTR{L}\xspace}
\def\awfsMc{\FTR{M}\xspace}
\def\awfsRg{\FTR{R}\xspace}
\def\awfsLg{\FTR{L}\xspace}
\def\awfsMg{\FTR{M}\xspace}
\def\RcAlg{\ensuremath{\awfsRc\text{-}\CT{Alg}}\xspace}
\def\RcMap{\ensuremath{\awfsRc\text{-}\CT{Map}}\xspace}
\def\RgAlg{\ensuremath{\awfsRg\text{-}\CT{Alg}}\xspace}
\def\RgMap{\ensuremath{\awfsRg\text{-}\CT{Map}}\xspace}
\def\RAlg{\ensuremath{\awfsR\text{-}\CT{Alg}}\xspace}
\def\RMap{\ensuremath{\awfsR\text{-}\CT{Map}}\xspace}
\def\LcAlg{\ensuremath{\awfsL\text{-}\CT{Coalg}}\xspace}
\def\LMap{\ensuremath{\awfsL\text{-}\CT{Map}}\xspace}
\def\sid#1{\Id{\sct{#1}}}
\def\sdom#1{\ensuremath{\FTR{d}_{\sct{#1}}}}
\def\scod#1{\ensuremath{\FTR{c}_{\sct{#1}}}}
\def\srefl#1{\ensuremath{\mathsf{r}}_{\sct{#1}}}
\def\sstr#1{\ensuremath{\mathsf{s}_{\sct{#1}}}}
\def\isosct#1{\ensuremath{\mathrm{Iso}(\sct{#1})}}
\def\sct#1{\ensuremath{\mathbb{#1}}}
\def\blank{\mathrm{-}}
\newenvironment{innerenu}[1]%
{\begin{enumerate}
 [label={\normalfont\alph*.},ref=#1\normalfont.\alph*]}%
{\end{enumerate}}
\newcommand{\xycentre}[2][]{\vcenter{\hbox{\xymatrix#1{#2}}}}
\date{}
\begin{document}

\title{A characterisation of elementary fibrations}
\author{Jacopo Emmenegger
\thanks{School of Computer Science,
University of Birmingham,
Birmingham B15 2TT, UK,\hfill\mbox{}
email:~\texttt{op.emmen@gmail.com}}
\and
Fabio Pasquali
\thanks{DIMA, Universit\`a degli Studi di Genova,
via Dodecaneso 35, 16146 Genova, Italy,\hfill\mbox{}
email:~\texttt{pasquali@dima.unige.it}}
\and
Giuseppe Rosolini
\thanks{DIMA, Universit\`a degli Studi di Genova,
via Dodecaneso 35, 16146 Genova, Italy,\hfill\mbox{}
email:~\texttt{rosolini@unige.it}}
}

\maketitle

\begin{abstract}
Grothendieck fibrations provide a unifying algebraic framework that underlies
the treatment of various form of logics, such as 
first order logic, higher order logics and dependent type theories.
In the categorical approach to logic proposed by Lawvere,
which systematically uses adjoints to describe the logical operations,
equality is presented in the form of a left adjoint to reindexing along a diagonal arrows in the base.
Taking advantage of the modular perspective provided by category theory,
one can look at those Grothendieck fibrations which sustain just the structure of equality,
the so-called elementary fibrations, aka fibrations with equality.

The present paper provides a characterisation of elementary fibrations
based on particular structures in the fibres,
called transporters.
The characterisation is a substantial generalisation of the one already available for faithful fibrations.
There is a close resemblance between transporters
and the structures used in the semantics
of the identity type of Martin-L\"of type theory.
We close the paper by comparing the two.
\end{abstract}

\section{Introduction}

Grothendieck fibrations provide a unifying algebraic framework that underlies
the treatment of various form of logics, such as 
first order logic, higher order logics and dependent type theories. The approach dates back to the seminal work of Bill Lawvere on functorial semantics, in
particular his work on hyperdoctrines~\cite{LawvereF:adjif,LawvereF:equhcs}.
The structure consists of a functor $P:\ct{A}\ftr\ct{T}$ which is a fibration (we recall the
definition in Section~\ref{two});
the base \ct{T} is to be understood as the universe of discourse given by the sorts and the
terms of the theory, while, for an object $X$ in \ct{T},
the category $\ct{A}_X$ of objects and arrows over $X$
presents the properties,
the ``attributes'' in the words of Lawvere, of the object $X$ and the relevant
entailments between them. Logic, in a sense, appears via properties of the fibration which
involve adjoints to basic functors such as
``doubling the objects''
$$\xymatrix@C=3em@R=3em{\ct{A}\ar[rr]^-{\ple{\Id{\ct{A}},\Id{\ct{A}}}}\ar[rd]_-{P}&&
\ct{A}\times_{\ct{T}}\ct{A}\ar[dl]^-{P\times_{\ct{T}}P}\\
&\ct{T}}$$
see also \cite{JacobsB:catltt}.

Some time after Lawvere proffers his structural view of logic, Per Martin-L\"of puts forward
the proposal of a dependent type theory where the constructions on types match very closely
the construction on formulas, see~\cite{MartinLofP:inttot,MartinLofP:conmcp,ML84}.
Expressly, he refers to
preliminary work of Dana Scott~\cite{ScottD:conv} on a tentative calculus of proofs as
constructions; in turn Scott acknowledges the earlier attempts of Lawvere as they all address
the idea to give propositions the same status as types (this idea indeed would later become
known with similar, more fashionable locutions).

One of the many remarkable features of the Lawverian proposal for a categorical approach to
logic is the systematic use of adjoint functors to describe the logical operations, in
particular the realisation that
equality comes in the form of a left adjoint to certain,
structural, reindexing functors,
see~\cite{LawvereF:equhcs}.
The counterpart in type theory appears in the literature
some fifteen years later in the form of ``propositional equality'',
see~\cite{MartinLofP:conmcp}, but also~\cite{PMTT} which calls it ``intensional equality'',
often it is simply called ``identity type''.
Roughly speaking, given a type $T$, the identity type for $T$
can be understood as a family of ``proofs of equality''
between terms of type $T$,
inductively generated by reflexivity with a fixed endpoint.

Quite remarkably, some fundamental work
in the categorical semantics of type theory resurrects intuitions about equality
which featured prominently in~\cite{LawvereF:equhcs}:
the groupoid interpretation of Martin-L\"of Type Theory
of~\cite{HofmannStreicher1998} and
the interpretation of identity types as homotopies of~\cite{AwodeyWarren2009}.
This cannot be a surprise. The Lawverian approach intended to conjoin the geometric view and
the abstract view of logic, in particular equality had to be as powerful as to admit that two
homotopical paths could be considered, in fact argued as if they were, equal, as powerful as
to admit that isomorphic structures could be considered, in fact transformed into, the same.

Taking advantage of the modular perspective provided by category theory, one can look at
those Grothendieck fibrations which sustain just the structure of equality. These are called
``fibrations with equality satisfying Frobenius'' in \cite[3.4.1]{JacobsB:catltt}. In some
previous work \cite{EmmeneggerJ:eledac} we concentrated on the particular case of
\emph{faithful} fibrations with equality satisfying Frobenius---which we renamed
elementary doctrines referring to Lawvere's original terminology---%
and we shall follow
suit and christen \dfn{elementary fibration} the notion defined in
\cite[3.4.1]{JacobsB:catltt}.

Motivated by the characterisation obtained in \cite{EmmeneggerJ:eledac},
we present a characterisation of elementary fibrations
that contributes to shed light on the relationship
between the approaches to equality via category theory and via type theory.
The characterisation is based on a structure
which consists of a family of internal actions on the ``attributes''.
Slightly more precisely the family consists,
on each of the objects in the total category \ct{A}
of the fibration $P:\ct{A}\ftr\ct{T}$,
of an algebra map for a certain pointed endofunctor on \ct{A}
over the endofunctor on \ct{T} which maps
an object $X$ to the product $X \times X$.
However, we find it convenient
to introduce such a structure in a more elementary way,
by a gradual strengthening of weaker structures.
This simplifies the comparison
with the type-theoretic approach:
as it will become clear,
in type-theoretic terms this family of actions can be understood
as a transport along a proof of equality.

The complete statement of our main result lists other equivalent
characterisations of an elementary fibration
and the proof builds on the well-known observation that
existence of left adjoints to reindexing is equivalent
to existence of cocartesian lifts.
In the case of faithful fibrations, the characterisation reduces to
the well-known characterisation of first-order equality
as a reflexive and substitutive relation stable under products.
The parallel with the results in~\cite{EmmeneggerJ:eledac}
also suggests a coalgebraic characterisation of elementary fibrations
and this will appear in a subsequent paper in preparation.

We also apply the characterisation to discuss the relationship
between elementary fibrations and fibrations coming from
the homotopical semantics of identity types.
Since the work by Awodey and Warren~\cite{AwodeyWarren2009}
and Gambino and Garner~\cite{GambinoGarner2008},
weak factorisation systems,
best known as part of Quillen model categories~\cite{Quillen},
have proved to provide a suitable framework
to account for the inductive nature of identity types,
see for instance~\cite{KapulkinLumsdaine2012,Shulman2015,Joyal2017,vdBerg2018a}.
Recall that
a \dfn{weak factorisation system} on a category \ct{C}
consists of two classes of arrows \leftcl and \rightcl
which contain all the isomorphisms and are closed under retracts,
and such that each arrow in \ct{C} factors as an arrow in \leftcl
followed by an arrow in \rightcl,
and for each square
\[\xymatrix@=1.5em{
A	\ar[d]_-l \ar[r]	&	X	\ar[d]^-r
\\
B	\ar[r]	&	Y
}\]
with $l \in \leftcl$ and $r \in \rightcl$
there is a diagonal filler $d: B \to X$
making the two triangles commute.
Note that, contrary to orthogonal factorisation systems,
such a filler is not required to be unique.
A weak factorisation system determines a fibration:
the full subfibration of the codomain fibration
on those arrows in the right class \rightcl.
When the factorisation system is orthogonal (plus some additional conditions)
that fibration is always elementary.
But, not surprisingly,
for weak factorisation system
it is rarely so.

A weak factorisation system seems to lack the structure
to isolate a suitable non-full subcategory structure on \rightcl.
It also seems to lack the structure 
to soundly interpret the rules of the identity type in its associated fibration.
Indeed, what in a weak factorisation system is a property of an arrow,
namely the existence of diagonal fillers,
in type theory is structure on a type,
namely a choice of a family of terms.
This has made it impossible so far to interpret,
for example, (one of) the substitution rules for the identity type.
As shown by van den Berg and Garner~\cite{vdBergGarner2012},
this gap can be overcome by imposing some algebraic conditions
on a choice of fillers for just certain squares.
Further work by Garner and 
others~\cite{Garner2009,GambinoSattler2017,GambinoLarrea2019}
identified a suitable framework to express these conditions
in algebraic weak factorisation systems~\cite{GrandisTholen},
see also~\cite{BourkeGarner}.

The richer structure
of algebraic weak factorisation systems,
whose definition we recall in Section~\ref{Applications},
produces also more structured fibrations.
In particular, this is the case with
the algebraic weak factorisation system on the category of small categories \Cat
(and its full subcategory \Gpd on the groupoids)
whose underlying weak factorisation system is
the one of acyclic cofibrations and fibrations from the canonical,
or ``folk'', model structure on \Cat (and \Gpd).
We prove that the fibration of algebras
associated to the algebraic weak factorisation system
is elementary. 
In the case of the algebraic weak factorisation system on \Gpd,
the associated full comprehension category is the Hofmann--Streicher
groupoid model from~\cite{HofmannStreicher1998},
see~\cite{GambinoLarrea2019} for the relation to the groupoid model,
and~\cite{EmmeneggerJ:elefog} for a discussion of the enriched case.

There are two side results worth noticing:
the first one is that we find models of dependent
type theory where equality is given as an adjunction,
but the identity types are not trivial.
The second one is that the intensional model of type theory given by
the groupoid interpretation can be reconnected
to one of the original suggestions of Lawvere in \cite{LawvereF:equhcs}.

In Section~\ref{two} we recall notations and results from the theory
of fibrations which are necessary for the following sections. In
Section~\ref{three} we introduce the structure of transporters in a
fibration and prove some elementary properties of these. These are put
to use in Section~\ref{four} which contains the main characterisation
theorem. Finally Section~\ref{Applications} contains applications to
algebraic weak factorisation systems.
In particular, we illustrate the connection between the
groupoid hyperdoctrine of Lawvere and
the groupoid model of Hofmann and Streicher.

\section{Preliminaries}\label{two}

Let $p:\ct{E}\ftr\ct{B} $ be a functor.
An arrow $\varphi$ in \ct{E} is said
to be \dfn{over} an arrow $f$ in \ct{B} when $p(\varphi) = f$.
For $X$ in \ct{B}, the fibre $\ct{E}_X$
is the subcategory of $\ct{E}$ of arrows over $\id{X}$.
In particular, an object $A$ in \ct{E} is said to be
\dfn{over} $X$ when $p(A) = X$.

Recall that an arrow $ \varphi : A \to B $ is \dfn{cartesian} if,
for every $ \chi : A' \to B $ such that $ p(\chi) $ factors through $ p(\varphi) $ via an arrow $ g : X' \to X $,
there is a unique $ \psi : A' \to A $ over $ g $ such that $ \varphi \psi = \chi $,
as in the left-hand diagram below.
And an arrow $ \theta : A \to B $ is \dfn{cocartesian} if it satisfies the dual universal property of cartesian arrows
depicted in the right-hand diagram below.
\[
\xycentre[@R=4em]{
\ct{E}	\ar[d]_-p	\\	\ct{B}	}
\qquad\qquad
\xycentre[@R=1em@C=5em]{
A'\ar@{.>}[dr]_{\psi} \ar@/^/[drr]^{\chi}	&&\\
&	A	\ar[r]_{\varphi}	&	B	\\
X'	\ar[dr]_g \ar@/^/[drr]	&&\\
&	X	\ar[r]	&	Y	}
\qquad
\xycentre[@R=1em@C=5em]{
A	\ar@/_/[drr]_{\upsilon} \ar[r]^{\theta}	&	B	\ar@{.>}[dr]^{\omega}	&\\
&&	B'	\\
X	\ar@/_/[drr] \ar[r]	&	Y	\ar[dr]^g	&\\
&&	Y'	}
\]

Once we fix an arrow $ f : X \to Y $ in \ct{B} and an object $B$ in $ \ct{E}_Y $,
a cartesian arrow $\varphi : A \to B$ over $f$ is uniquely determined up to isomorphism,
\ie if $\phi' : A' \to B$ is cartesian over $f$,
then there is a unique iso $\psi : A' \to A$ such that $\varphi \psi = \varphi'$.

Clearly, every property of cartesian arrows applies dually to cocartesian arrows.
So for an arrow $ f : X \to Y $ in \ct{B} and an object $A$ in $ \ct{E}_X $,
a cocartesian arrow $\theta : A \to B$ over $f$ is uniquely determined up to isomorphism.

In the following, we write cartesian arrows as $\car$, and
cocartesian arrows as $\cocar{}$.

A functor $p:\ct{E}\ftr\ct{B}$ is a \dfn{fibration} if,
for every arrow $f:X\to Y$ in \ct{B} and for every object $A$ in
$\ct{E}_Y$, there is a \dfn{cartesian lift} of $f$ into $A$, that is,
an object $\ridx{f}{\!A}$ and a cartesian arrow
$f\carl{A}:\ridx{f}{\!A}\car A$ over $f$.
A \dfn{cleavage} for the fibration $p$ is a choice of a cartesian lift
for each arrow $f:X\to Y$ in \ct{B} and object $B$ in $\ct{E}_Y$,
and a \dfn{cloven fibration} is a fibration equipped with a cleavage.
In a cloven fibration, for every $f:X\to Y$ in \ct{B},
there is a functor $\ridx{f}{}:\ct{E}_Y\to\ct{E}_X$
called \dfn{reindexing} along $f$.
Henceforth we assume that fibrations can be endowed with a
cleavage.

\begin{rem}\label{brem}
It is well-known that, for the fibration $p:\ct{E}\ftr\ct{B}$, an arrow
$f:X\to Y$ in \ct{B} has cocartesian lifts if and only if the
reindexing functor $\ridx{f}{}:\ct{E}_Y\ftr\ct{E}_X$ has a left
adjoint. The value of the left adjoint at an object $A$ over $X$ can
be chosen as the codomain $A'$ of a cocartesian lift $A\cocar A'$
of $f:X\to Y$ at $A$. Conversely, the cocartesian lift is given by
the composition 
\[
\xymatrix@C=4em{
A	\ar[r]^-{\eta_A}
&	\ridx{f}{(\ladjD{f}(A))}	\xycar[r]^-{f\carl{\ladjD{f}(A)}}
&	\ladjD{f}(A)	}
\]
of the unit $\eta_A:A\to\ridx{f}{(\ladjD{f}(A))}$ of the
adjunction $\ladjD f\dashv\ridx{f}{}$ and the cartesian lift of
$f$.
\end{rem}

Fibrations are ubiquitous in mathematics and the list of examples is
endless. Since our aim is to characterise those fibrations which
encode a proof-relevant notion of equality, we choose the following
classes of examples.

\begin{rxm}\label{runex-fib}
(a)
Given a category \ct{C}, let
 $\FamC$ be the category whose objects are set-indexed
families of objects in \ct{C},
\ie pairs \famI
where $I$ is a set and $A_i$ is an object in \ct{C}, for $i \in I$,
and an arrow from \famI to \famJ
is a pair $(f,\phi)$
where $f:I \to I'$ is a function
and $\phi = (\phi_i : A_i \to A_{f(i)})_{i\in I}$ is a family of
arrows in \ct{C}, see \cite[1.2.1]{JacobsB:catltt}.

Equivalently, an object of $\FamC$ is a functor
$A:I\to \ct{C}$ where $I$ is a set seen as a discrete category, and an
arrow from $A$ to $B:J\to \ct{C}$ is a pair $(f,\phi)$ where $f:I\to
J$ is a function and $\phi:A \tnat Bf$ is a natural
transformation as in the diagram 
\[
\xymatrix@C=4em@R=1em{
I\ar[dd]_f\ar[rd]^(.4){A}_(.4){}="P"&&\\
& \ct{C}.&\\
J \ar[ru]_(.4){B}^(.4){}="R"&\xynatvr_-{\phi}"P";"R"&}
\]
Since $I$ is discrete, all commutative diagrams for naturality are trivial.

The functor $\domF:\FamC\ftr\Set$ that sends
$(f,\phi) : A \to B$
to $f:I\to J$ is a  fibration.
An arrow $(f,\id{}):Bf\to B$ is cartesian into $B$ over $f:I\to J$.
Note that $\Fam(\ct{1})\equiv\Set$ and that the fibration
$\domF:\FamC\ftr\Set$ is isomorphic to
$\Fam(!):\Fam(\ct{C})\ftr\Fam(\ct{1})$,
where $!: \ct{C}\to\ct{1}$ is the unique functor.
\smallskip

\noindent(b)
Let $\arrcl{F}$ be a full subcategory of $\ct{C}\twr$,
so that an arrow $f : a \to b$ in \arrcl{F},
where $a$ and $b$ are arrows in \ct{C},
is a commutative square
\[\xymatrix{
A	\ar[r]^-{f_1} \ar[d]_-a	&	B	\ar[d]^-b
\\
X	\ar[r]^-{f_0}	&	Y.
}\]
Assume that,
for every $f : X \to Y$ in \ct{C} and  $g : B \to Y$ in $\arrcl{F}$,
there is a pullback square
\[
\xymatrix{
P	\ar[d]_-{g'} \ar[r]	&	B	\ar[d]^-g	\\
X	\ar[r]_-f	&	Y	}
\]
and $g' \in \arrcl{F}$. In that case
the composite
\[
\xymatrix{
\arrcl{F}	\ar@{^{(}->}[r] \ar@/_1em/[rr]_-{\cod\rst{\arrcl{F}}}
&	\ct{C}\twr	\ar[r]^-{\cod}	&	\ct{C}
}
\]
is a fibration.
Given $f : X \to Y$ in \ct{C}, a cartesian lift into the object $g : B \to Y$,
where $g \in \arrcl{F}$, is a pullback square as the one above.
Since \arrcl{F} is full, in the following we shall often confuse it with its collection of objects,
\ie a collection of arrows of \ct{C}.

When \ct{C} has pullbacks we can choose \arrcl{F} even as $\ct{C}\twr$ itself.
In the particular case of $\ct{C} \colon= \Set$ the example in (a)
come to be the same as the example in (b) since there is an
equivalence
\[\xymatrix@C=1.5em{\FamS\ar[dr]_-{\domF}\ar@{3-3-}[rr]&
&\Set\twr\ar[dl]^-{\cod}\\
&\Set&}\]
\end{rxm}

Recall that a fibration $ p : \ct{E} \ftr \ct{B} $ 
\dfn{has finite products} if the base \ct{B} has finite products as
well as each fibre $ \ct{E}_X $, and each reindexing functor preserves
products. Equivalently, both \ct{B} and \ct{E} have finite products
and $p$ preserves them. 

\begin{nota}
We do not require a functorial denotation for products;
when we write $1$ we refer to any terminal object in \ct{B} and,
similarly for objects $X$ and $Y$ in \ct{B},
when we write $ X \times Y $,
$ \pr 1 : X \times Y \to X $ and $\pr 2 : X \times Y \to Y$,
we refer to any diagram of binary products in \ct{B}.
Universal arrows into a product
induced by lists of arrows shall be denoted as $ < f_1 ,\dots, f_n > $,
but lists of projections
$ < \pr{i_1}, \dots, \pr{i_k} > $
will always be abbreviated as $ \pr{i_1,\dots,i_k} $.
In particular, as an object $ X $ is a product of length 1,
sometimes we find it convenient to denote the identity on $ X $ as $ \pr 1 $,
the diagonal $ X \to X \times X $ as $ \pr{1,1} $
and the unique $X \to 1$ as \pr{0}.
As the notation is ambiguous,
we shall always indicate domain and codomain of lists of projections
and sometimes we may distinguish projections
decorating some of them with a prime symbol.

We shall employ a similar notation for
binary products and projections in a fibre $ \ct{E}_X $,
as \trmf{X}, $A\land_XB$, $\pi_1:A\land_XB\to A$ and
$\pi_2:A\land_XB\to B$,
dropping the subscript in \trmf{X} and $\land_X$ when it is
clear from the context.
Moreover, given objects $A$ in $\ct{E}_X$ and $B$ in $\ct{E}_Y$ write
\[
A \glprd B  \colon= (\ridx{\pr 1} A) \land_{X \times Y} (\ridx{\pr 2} B)
\]
for the product of $A$ and $B$ in the total category \ct{E}.
Given a third object $ C $ and two arrows $\varphi_1:C\to A$ and
$\varphi_2:C\to B$, we  denote the induced arrow into $A\glprd B$
also as $ <\varphi_1,\varphi_2>$. 
\end{nota}

\begin{rxm}\label{runex-prim}
(a)
Consider the fibration $\domF: \FamC \ftr \Set$ defined in
\rxx\ref{runex-fib}(a) and 
suppose that \ct{C} has finite products.
Then the fibration $\domF$ has finite products.
Indeed a product of the two families $A:I\to \ct{C}$ and
$B:I\to\ct{C}$ in the fibre $\FamC_I$ is the family 
$A\land B:I\to\ct{C}$ where $(A\land B)_i$ is $A_i\times B_i$
with projections 
$(\id I,\pr1)$ and $(\id I,\pr2)$ where $(\pr1)_i$ is the first
projection $A_i\times B_i\to A_i$. 
A terminal object in $\FamC_I$ is given by the family $1:I\to \ct{C}$
which is constantly a chosen terminal object of \ct{C}.
\smallskip

\noindent (b)
Assume that the base \ct{C} of the fibration $\cod\rst{\arrcl{F}}$
defined in \rxx\ref{runex-fib}(b) has finite products. Then the
fibration $\cod\rst{\arrcl{F}}$
has finite products, and in the fibres these are given by pullback of
arrows in \arrcl{F}.
\end{rxm}

\section{Transporters}\label{three}

This section presents
a structure that will be useful in the characterisation in \thx\ref{mainthm},
providing along the way examples and some instrumental results.

\begin{dfnt}
Let $p : \ct{E} \ftr \ct{B}$ be a fibration with finite products
and consider an object $X$ in \ct{B}.
A \dfn{loop} on $X$ consists of an object $\eqobj X$ over $X\times X$ and
an arrow $\cocD X:\trmf X\to\eqobj X$ over $\pr{1,1}:X\to X\times X$.
\end{dfnt}

\begin{dfnt}
A fibration $p : \ct{E} \ftr \ct{B}$ with finite products
\dfn{has loops} if it has a loop on every $X$.
The fibration $p$ has \dfn{productive loops} if
\begin{enumerate}[label=(\roman*)]
\item
there is a loop $\tt_X \overset{\cocD{X}}{\longrightarrow} \eqobj{X}$
on every $X$ in \ct{B};
\item
for every $X$ and $Y$ in \ct{B}, there is a vertical arrow
$\xymatrix@1{\chi_{X,Y}:\xeqobj X Y\ar[r]&\eqobj{X\times Y}}$.
\end{enumerate}
It has \dfn{strictly productive loops} if moreover
the following diagram commutes for every $X$ and $Y$.
\[\xycentre[@C=4em]{
&\trmf{X\times Y}
\ar[dl]_{\cocD X \glprd \cocD Y} \ar[dr]^{\cocD{X \times Y}}
&\\
\xeqobj X Y	\ar[rr]^-{\chi_{X,Y}}
&&	\eqobj{X\times Y}
}\]
\end{dfnt}

\begin{nota}\label{nota-cocar}
Let $p:\ct{E}\ftr\ct{B}$ be a fibration with loops.  Given $A$ over $X$, we find it convenient
to write $\cocaD A$ for the arrow
$<\pr{1,1}\carl{(\ridx{\pr1}A)},\cocD X!_A> : A \to \desd X A$. We shall also need a
parametric version of it, as for instance in \dfx\ref{def-partrsp}.
When $A$ is an object over $Y\times X$ we write $\cocaDp YA$ for the arrow
$<\pr{1,2,2}\carl{(\ridx{\pr{1,2}} A)},\theta> : A \to \desdp X A$,
where $ \theta : A \to \ridx{\pr{2,3}}{\eqobj X} $ is the unique arrow
over \pr{1,2,2} obtained by cartesianness of
$\ridx{\pr{2,3}}{\eqobj X}\car\eqobj X$
as shown in the diagram
\[\xymatrix@C=3em@R=1ex{A\ar[dd]^-{!_A}\ar@{-->}[rrd]^{\theta}&
  \\
 &&\ridx{\pr{2,3}}{\eqobj X}\xycar[rdd]\\
\trmf{Y\times X}\ar[rd]^-{\pr2\carl{\trmf X}}&\\
&\trmf X\ar[rr]^-{\cocD X}&&\eqobj X\\ 
Y\times X\ar[rd]_-{\pr2}\ar[rr]_(.6){\pr{1,2,2}}&&
Y\times X\times X\ar[rd]^-{\pr{2,3}}\\
&X\ar[rr]_-{\pr{1,1}}&&X\times X.}\]

We write \gdiagcl for the class of arrows of the form
$\pr{1,2,2}:Y\times X\to Y\times X\times X$, for $Y,X$ in \ct{B},
and \cocaCl for the class consisting of all arrows of the form
\cocaDp{Y}{A} in \ct{E} defined above,
for $A$ over $Y \times X$ and $Y,X$ in \ct{B}.
\end{nota}

\begin{dfnt}
Let $p : \ct{E} \ftr \ct{B}$ be a fibration with finite products,
$X$ an object in \ct{B} and $\cocD{X}:\tt_X \to \eqobj{X}$ a loop on $X$.
Given
$A$ over $X$,
a \dfn{carrier for the loop \cocD{X} at $A$} is an arrow
$\trsp A : \desd X A \to A$
over $\pr 2:X\times X\to X$.
The carrier $\trsp A$ is \dfn{strict} if
$\trsp{A}\cocaD{A} = \id{A}$.
\end{dfnt}

\begin{dfnt}\label{E4prd}
Let $p : \ct{E} \ftr \ct{B}$ be a fibration with finite products
and $X$ an object in \ct{B}.
A \dfn{\transporter on $X$} consists of
\begin{enumerate}[label=(\roman*)]
\item
a loop $\cocD X:\trmf X\to\eqobj X$ on $X$;
\item
for every $A$ over $X$, a carrier for \cocD{X}.
\end{enumerate}
A \transporter is \dfn{strict} if every carrier is strict.

A fibration $p : \ct{E} \to \ct{B}$
\dfn{has $($strict$)$ \transporter{}s}
if it has a (strict) \transporter on each $X$ in \ct{B}.

A fibration $p : \ct{E} \to \ct{B}$
\dfn{has $($strict$)$ productive \transporter{}s} if
\begin{enumerate}[label=(\roman*)]
\item
it has (strict) transporters;
\item
loops are (strictly) productive.
\end{enumerate}
\end{dfnt}

\begin{rem}\label{mah}
Strict productive \transporter{}s give a commutative diagram
\[\xymatrix@C=4em{&\trmf{X\times Y}
\ar[dl]_{\cocD X \glprd \cocD Y}\ar[dr]^{\cocD{X \times Y}}&\\
\xeqobj X Y\ar[rr];[]_-{\omega_{X,Y}}&&\eqobj{X\times Y}}\]
for a canonical arrow
$\omega_{X,Y} : \eqobj{X \times Y} \to \eqobj{X} \glprd \eqobj{Y}$.
The reader will see this in the proof of \ref{E4}\Implies\ref{E5} in
\thx\ref{mainthm}.
\end{rem}

\begin{rxm}\label{runex-trsp}
(a) Consider the fibration $\domF: \FamC \ftr \Set$ from
\rxx\ref{runex-fib}(a). 
And suppose that \ct{C} has a stable initial object,
\ie an initial object $0$ such that
$\xymatrix@1@=1.5em{
0\times A\ar[r]|-{\raisebox{.9ex}{\makebox[0pt]{$\scriptstyle\sim$}}}&0}$
for all $A$. 
Let $1$ be a terminal object of $\ct{C}$, and consider the family
$\eqobj X:X\times X\to\ct{C}$ as the function that maps $(a,b)$ to $1$
if $a=b$ and to $0$ otherwise.
There are two natural transformations
$\iota : \tt_X \tnat \eqobj X \pr{1,1}$,
whose component on $x \in X$ is the identity,
and $\tau_A : (A\pr1) \land \eqobj X \tnat A \pr2$
whose component on $(x_1,x_2)$ is the identity on $A(x_1)$ if
$x_1 = x_2$, and the unique arrow $0 \to A(x_2)$ otherwise.
The object $\eqobj X$ and arrows $(\pr{1,1},\iota), (\pr2,\tau_A)$ for $A$ over $X$,
form a strict \transporter for the set $X$. And these are productive as the components on
$(x_1,y_1,x_2,y_2)$ of $\eqobj X \glprd \eqobj Y$ and $\eqobj{X\times Y}$ are both initial or
both terminal.
Hence one may take the canonical iso as the component of $\chi_{X,Y}$ on $(x_1,y_1,x_2,y_2)$.
\smallskip

\noindent(b)
Let \ct{C} be a category with finite products and
suppose that \ct{C} has a weak factorisation system $(\leftcl,\rightcl)$ such that
\ct{C} has pullbacks of arrows in \rightcl along any arrow.
It follows that arrows in the right class \rightcl satisfy the hypothesis of \rxx\ref{runex-fib}(b),
so $\cod\rst{\rightcl} : \rightcl \ftr \ct{C}$ is a fibration with products.
If arrows in the left class \leftcl are stable under pullback
along arrows in the right class \rightcl,
then every object $X$ of \ct{C} has a strict \transporter defined as follows.
A loop $\cocD X\colon=\ple{\refl{X},\pr{1,1}}$ is obtained factoring
the diagonal $\pr{1,1} : X \to X \times X$ as 
\[\xymatrix@C=3em{X\ar[r]^-{\refl X}\ar[d]_-{\id{X}}&PX\ar[d]^-{\eqobj X}\\
X\ar[r]_-{\pr{1,1}}&X \times X}\]
where \refl{X} is in \leftcl.
For an arrow $a \in \rightcl$, consider the following commutative diagram.
\[\xymatrix@C=3em{
A	\ar@/_1em/[ddr]_(.65){\id A} \ar[rr]
	\ar[dr]|-{\strut\refl{a}\strut} \ar@/^2em/[rrrr]^-{\id A}
&&	X	\ar[dr]|-{\strut\refl{X}\strut}
		\ar@/_1em/|(.44){\hole}[ddr]_(.65){\id X}
&&	A	\ar[d]^-a
\\
&A\times_X P X\ar[d]\ar[rr]&&
P X\ar[d]^-{\pr1\eqobj X}\ar[r]^-{\pr2\eqobj X}&X
\\
&	A	\ar[rr]_-a	&&	X&}\]
Since the arrow \refl{a} is a pullback of \refl{X} along an arrow in \rightcl,
it is in \leftcl.
It follows by weak orthogonality that there is $t_a : A \times_X PX
\to A$ filling in the previous diagram
\[\xymatrix@C=3em{
A\ar[d]_-{\refl{a}}\ar[rr]^-{\id A}&&A\ar[d]^-a\\
A\times_XP X\ar[r]\ar[rru]|-{t_a}&P X\ar[r]^-{\pr2\eqobj X}&X}\]
A carrier at $a$ is then $(\pr{2},t_a)$.
Instances of this situation can be found
in any Quillen model category where acyclic cofibrations
are stable under pullback along fibrations, but also in
Shulman's type-theoretic fibration categories~\cite{Shulman2015} and
Joyal's tribes~\cite{Joyal2017}.

Suppose now that the class \leftcl is stable under products
in the sense that, for every object $X$ in \ct{C},
the functor $(\blank) \times X : \ct{C} \to \ct{C}$ maps \leftcl into \leftcl.
Then $\cod\rst{\rightcl}$ has strict productive \transporter{}s.
Indeed in this case
$\cocD X \glprd \cocD Y = (\pr{1,2,1,2},\refl{X} \times \refl{Y})$
and the arrow $\refl{X} \times \refl{Y}$ is in \leftcl
as it factors as shown below.
\[
\xymatrix@C=5em{
X \times Y 	\ar[r]^-{\refl X \times Y} \ar@/_1em/[rr]_-{\refl X \times \refl Y}
&	P X \times Y	\ar[r]^-{PX \times \refl Y}
&	PX \times PY
}
\]
And the rest of the argument is similar to the one in (c) below.
\smallskip

\noindent(c)
Let $(\ct{C},\weqcl,\fibrcl)$ be a path category.
It consists of two full subcategories \weqcl and \fibrcl of $\ct{C}\twr$
closed under isomorphisms and satisfying some additional conditions, see~\cite{vdBergMoerdijk2018}.
In particular, the category \fibrcl satisfies the hypothesis of \rxx\ref{runex-fib}(b),
so $\cod\rst{\fibrcl} : \fibrcl \ftr \ct{C}$ is a fibration with products.
In the notation of~\cite{vdBergMoerdijk2018},
the arrow $(s,t) : PX \to X \times X$ together with the arrow $r : X \to PX$ and,
for every $f \in \fibrcl$, a transport structure in the sense of~\cite[Def.\ 2.24]{vdBergMoerdijk2018},
provides a (not necessarily strict) \transporter for $X$.
In the following, when dealing with a path category,
we shall always try to stick to the notation in \cite{vdBergMoerdijk2018};
however we prefer to denote the arrow $r : X \to PX$ as $\refl X$.
Since the arrows in \weqcl are stable under pullback along arrows in \fibrcl,
see~\cite[Prop.\ 2.7]{vdBergMoerdijk2018},
it follows that $\refl X \times \refl Y$ is in \weqcl,
as terminal arrows are in \fibrcl.
Hence we obtain $\chi_{X,Y}$ with the required properties as the arrow $(\id{},k)$,
where $k : PX \times PY \to P(X \times Y)$ is a lower filler in
\[
\xymatrix{
X \times Y	\ar[d]_-{\refl X \times \refl Y} \ar[r]^-{\refl{X \times Y}}
&	P(X \times Y)	\ar[d]^{(s,t)}
\\
PX \times PY	\ar[r]	&	X \times Y \times X \times Y	}
\]
see~\cite[Lemma 2.9]{vdBergMoerdijk2018}.
It follows that the fibration $\cod\rst{\fibrcl}$ has productive \transporter{}s.
Note that these are not necessarily strict
as the lower filler need not make the upper triangle commute.
\end{rxm}

\begin{rem}
Example \ref{runex-trsp}(c) fits only momentarily in the framework that we are developing.
This will become clear after \thx\ref{mainthm},
as our aim is to characterise elementary fibrations.
This suggests the relevance of a weaker notion than elementary fibration,
which we shall consider in future work.
\end{rem}

\begin{rem}\label{prdcocD}
For $X,Y$ in \ct{B},
we can rewrite
$\cocD{X}\glprd\cocD{Y}:\top_X\to\eqobj{X}\boxtimes\eqobj{Y}$
as the composite $<\iota,\cocD Y'!>\cocD X'$ as shown in the diagram
below.
\[\xymatrix@C=5em@R=2em{
&&&\trmf Y\ar[r]^-{\cocD Y}
&\eqobj Y
\\
&&\trmf{X\times Y\times X}\xycar[ur]\ar@{-->}[r]^-{\cocD Y'}
&\ridx{\pr{2,4}}{\eqobj Y}\xycar[ur]
&\\
\trmf X\ar[dr]_-{\cocD X}
&\trmf{X\times Y}\xycar[l]\ar@{-->}[dr]_(.4){\cocD X'}\ar[r]^-{\cocD X'}
&\ridx{\pr{1,3}}{\eqobj X}\ar[u]_-{!}\ar[d]^-{\id{}}\ar[r]^-{<\iota,\cocD Y'!>}
&\xeqobj X Y\ar[u]_-{\pi_2}\ar[d]^-{\pi_1}
&\\
&\eqobj X&\ridx{\pr{1,3}}{\eqobj X}\xycar[l]\xycar[r]^-{\iota}
&\ridx{\pr{1,3}}{\eqobj X}
&\\
X\ar[dr]|-{\phantom{I}\pr{1,1}}
&X\!\times\!Y\ar[l]|-{\pr1}\ar[dr]|-{\phantom{I}\pr{1,2,1}}&&
Y\ar[r]|-{\pr{1,1}}&Y\!\times\!Y
\\
&X\!\times\!X
&X\!\times\!Y\!\times\!X\ar@(lu,ru)[]|{\id{}}
\ar[l]|-{\pr{1,3}}\ar[ur]|-{\,\pr 2\,}\ar[r]|-{\pr{1,2,3,2}}
&X\!\times\!Y\!\times\!X\!\times\!Y\ar[ur]|-{\,\pr{2,4}\,}
\ar@/^10pt/[l]|-{\pr{1,2,3}}\ar@/^21pt/[ll]|-{\pr{1,3}}}
\]
\end{rem}

\begin{dfnt}\label{def-partrsp}
Let $p : \ct{E} \ftr \ct{B}$ be a fibration with \transporter{}s.
Let $X,Y$ be in \ct{B} and let $A$ be over $Y \times X$ in \ct{E}.
A \dfn{parametrised carrier} at $A$ for the transporter on $X$ is
an arrow 
\[
\trspp Y A : \desdp X A \to A
\]
over $\pr{1,3} : Y \times X \times X \to Y \times X$.
We say that the parametrised carrier $ \trspp Y A $ is \dfn{strict} if
$\trspp Y A \cocaDp Y A = \id A$,
where $\cocaDp Y A : A \to \desdp X A$ is the arrow defined in \ntx\ref{bnota}.
\end{dfnt}

\begin{prp} \label{partrsp}
Let $p : \ct{E} \ftr \ct{B}$ be a fibration.
\begin{enumerate}[label=(\roman*)]
\item
If $p$ has productive \transporter{}s,
then for every $X,Y$ in \ct{B},
there is a parametrised carrier at every $A$ over $Y \times X$.
\item
If the productive \transporter{}s are strict,
then so are the parametrised carriers.
\end{enumerate}
\end{prp}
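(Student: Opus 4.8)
The plan is to reduce the construction of a parametrised carrier in the $X$-coordinate to an \emph{ordinary} carrier on the product $Y\times X$, using productivity of the loops to inject a proof of equality on $X$, paired with a reflexivity proof on $Y$, into a proof of equality on $Y\times X$. Intuitively, transporting $A$ (over $Y\times X$) along a proof $x_1=x_2$ with $y$ held fixed ought to coincide with transporting along the combined proof $(y,x_1)=(y,x_2)$ whose $Y$-component is witnessed by reflexivity; this is exactly the content packaged by $\chi_{Y,X}$ and $\cocD Y$, which is why the hypothesis of \ref{E4prd} supplies both transporters on every object and the productivity arrows.

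Concretely, first I would fix the base arrow $g:=\pr{1,2,1,3}:Y\times X\times X\to(Y\times X)\times(Y\times X)$ sending $(y,x_1,x_2)\mapsto(y,x_1,y,x_2)$, and record the identities $\pr 2\circ g=\pr{1,3}$ and $\pr 1\circ g=\pr{1,2}$, so that an arrow over $g$ followed by one over $\pr 2$ lands over the required $\pr{1,3}$. Writing $\trsp A:\desd{Y\times X}{A}\to A$ for the carrier of the transporter on $Y\times X$ at $A$ and $g\carl{}:\ridx{g}{(\desd{Y\times X}{A})}\to\desd{Y\times X}{A}$ for the cartesian lift of $g$, the composite $\trsp A\circ g\carl{}$ is over $\pr 2\circ g=\pr{1,3}$. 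It then remains to produce a vertical arrow $\alpha:\desdp X A\to\ridx{g}{(\desd{Y\times X}{A})}$ and set $\trspp Y A:=\trsp A\circ g\carl{}\circ\alpha$.

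Since reindexing preserves finite products, there is a canonical iso $\ridx{g}{(\desd{Y\times X}{A})}\cong(\ridx{\pr{1,2}}{A})\land(\ridx{g}{\eqobj{Y\times X}})$, whose first factor already matches that of $\desdp X A=(\ridx{\pr{1,2}}{A})\land(\ridx{\pr{2,3}}{\eqobj X})$. So I would take $\alpha=\id{}\land\beta$, where the vertical arrow $\beta:\ridx{\pr{2,3}}{\eqobj X}\to\ridx{g}{\eqobj{Y\times X}}$ is built from the productivity structure: pair the given proof on $X$ with the reflexivity point $\trmf{}\to\ridx{\pr{1,1}}{\eqobj Y}$ obtained by reindexing the factorisation of $\cocD Y$ through $\pr{1,1}\carl{}$, identify the result with a reindexing of $\eqobj Y\glprd\eqobj X$ along $\pr{1,1,2,3}$, and postcompose with the reindexing of $\chi_{Y,X}$. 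The only bookkeeping point is that the canonical shuffle iso $(Y\times Y)\times(X\times X)\cong(Y\times X)\times(Y\times X)$ along which $\chi_{Y,X}$ sits composes with $\pr{1,1,2,3}$ to give exactly $g$, so $\beta$ does land in $\ridx{g}{\eqobj{Y\times X}}$; this establishes (i).

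For (ii), I would compute $\trspp Y A\circ\cocaDp Y A$ with $\cocaDp Y A$ as in Notation~\ref{nota-cocar}, and show it equals $\id A$. Observing that $g\circ\pr{1,2,2}=\pr{1,1}$, it suffices to prove $g\carl{}\circ\alpha\circ\cocaDp Y A=\cocaD A$, after which strictness of the carrier, $\trsp A\circ\cocaD A=\id A$, concludes. As both sides are arrows $A\to\desd{Y\times X}{A}$ over $\pr{1,1}$, I would check the two product components separately: the $A$-component reduces to a comparison of cartesian lifts ($\pr{1,2,2}\carl{}$ followed by $g\carl{}$ equals $\pr{1,1}\carl{}$), and the $\eqobj{Y\times X}$-component is where strict productivity is essential—combining the reflexivity proof on $X$ inside $\cocaDp Y A$ with the reflexivity point on $Y$ through $\chi_{Y,X}$ yields $\cocD{Y\times X}$ precisely because the strictly-productive triangle gives $\chi_{Y,X}\circ(\cocD X\glprd\cocD Y)=\cocD{Y\times X}$, the rewriting of $\cocD X\glprd\cocD Y$ in Remark~\ref{prdcocD} being the convenient tool. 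The main obstacle throughout is not conceptual but the disciplined bookkeeping of reindexings along the canonical product isomorphisms—verifying that all base arrows compose as claimed and that the two presentations of the domain $\desdp X A$ and of $\ridx{g}{(\desd{Y\times X}{A})}$ are identified compatibly with the component-wise computation in (ii).
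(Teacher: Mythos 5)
Your proposal is correct and takes essentially the same route as the paper's own proof: both reduce the parametrised carrier at $A$ over $Y \times X$ to the ordinary carrier $\trsp{A}$ of the transporter on $Y\times X$, precomposed with an arrow over $\pr{1,2,1,3}$ that pairs the $\eqobj{X}$-component with the reflexivity proof on $Y$ and merges them through $\chi$, and both obtain strictness from \rmx\ref{prdcocD} together with the strictly-productive triangle and strictness of $\trsp{A}$. The differences are purely presentational: you factor the arrow over $\pr{1,2,1,3}$ as a vertical arrow followed by a cartesian lift (using that reindexing preserves products), whereas the paper builds it directly from cartesian lifts and the pairing $<\cocD{Y}'\,!,\iota>$.
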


\begin{proof}
(i)
The arrow \trspp{Y}{A} can be obtained as
the composite
\[\xymatrix@C=3em{
\desdp X A	\ar[rr]^-{\alpha \land <\cocD Y' !, \iota >}
&&
(\ridx{\pr{1,2}} A) \land (\xeqobj Y X)	\ar[d]_-{\id{} \land \chi_{X,Y}}
&\\
&&	\desdg{Y \times X}{A}{\pr{1,2}}	\ar[r]^-{\trsp A}	&	A
\\
Y \times X \times X	\ar[rr]^-{\pr{1,2,1,3}}	&&%
Y \times X \times Y \times X	\ar[r]^-{\pr{3,4}}	&	Y \times X	}
\]
where $\alpha : \ridx{\pr{1,2}}A \car \ridx{\pr{1,2}} A $
and $\iota : \ridx{\pr{2,3}}(\eqobj X) \car \ridx{\pr{2,4}}(\eqobj X)$
are cartesian over $\pr{1,2,1,3}$ and
$\cocD Y':\trmf{Y\times X\times X}\to\ridx{\pr{1,3}}\eqobj Y $
is the unique arrow over $\pr{1,2,1,3}$ obtained by cartesianness from the composite
\[
\xymatrix@C=4em{
\trmf{Y\times X\times X}	\ar[r]^-{\pr1\carl{\trmf Y}}	&	\trmf Y	\ar[r]^-{\cocD Y}
&	\eqobj Y.}
\]

\noindent(ii)
Let $ \cocD X' : \trmf{Y \times X} \to \ridx{\pr{2,3}} \eqobj X $
be the unique arrow over $ \pr{2,3} $
obtained by cartesianness from $ \cocD X (\pr2 \carl{\trmf X}) $.
By \rmx\ref{prdcocD},
it is $ < \cocD Y' !, \iota > \cocD X' = \cocD Y \glprd \cocD X $.
It follows that $ \trspp Y A \cocaDp Y A = \trsp A \cocaD A = \id A $.
\end{proof}

\section{Elementary fibrations}\label{four}

Recall from \cite[3.4.1]{JacobsB:catltt} the following
definition.

\begin{dfnt}\label{elmfbr}
A fibration with products $ p : \ct{E} \ftr \ct{B} $ is \dfn{elementary} if,
for every pair of objects $ Y $ and $ X $ in \ct{B},
reindexing along the parametrised diagonal $ \pr{1,2,2} : Y \times X \to Y \times X \times X $
has a left adjoint $\ladjD{Y,X} : \ct{E}_{Y \times X} \ftr \ct{E}_{Y \times X \times X} $,
and these satisfy: 
\begin{description}
\item[Frobenius Reciprocity:]
for every $A$ over $Y\times X$ and $B$ over $Y\times X\times X$,
the canonical arrow
\[
\ladjD{Y,X}(\ridx{\pr{1,2,2}}B\land A)\ftr B\land\ladjD{Y,X}A
\]
is iso, and
\item[Beck-Chevalley Condition:]
for every pullback square
\[\xymatrix@C=6em@R=3em{
Y\times X\ar[d]_{\pr{1,2,2}}\ar[r]^-{f\times X}&%
Z\times X\ar[d]^{\pr{1,2,2}}\\
Y\times X\times X\ar[r]_-{f\times X\times X}&Z\times X\times X}\]
and every $A$ over $Z\times X$, the canonical arrow
\[
\ladjD{Y,X}\ridx{(f\times X)}A\ftr\ridx{(f\times X\times X)}{\,\ladjD{Z,X} A}
\]
is iso.
\end{description}
\end{dfnt}

Since the collection of parametrised diagonals will be often referred
to in the following, it is useful to introduce a notation for it.

\begin{nota}\label{bnota}
We write \gdiagcl for the class of arrows of the form
$\pr{1,2,2}:Y\times X\to Y\times X\times X$ in \ct{B}.
\end{nota}

\begin{rxm}\label{runex-sec}
(a)
The fibration $\domF:\FamC\ftr\Set$ of the
\rxx\ref{runex-fib} is elementary when $\ct{C}$ has finite
products and a stable initial object.
Indeed, let 
$\eqobj X:X\times X\to\ct{C}$ be the family defined in \rxx\ref{runex-trsp}.
Then, for every $A:Y\times X\to \ct{C}$, the family
\[
\xymatrix@R=1.5ex@C=1.8ex{
Y\times X\times X\ar[rr]^-{ \ladjD{Y,X}(A)}&&\ct{C}\\
(x,a,b)\ar@{|->}[rr]&&A(x,a)\times \eqobj X(a,b)
}
\]
determines the required left adjoint, see
\cite[3.4.3~(iii)]{JacobsB:catltt}.
\smallskip

\noindent (b) 
When $\ct{C}$ has finite limits, the fibration $\cod$ is elementary.%
\end{rxm}

Let $ p : \ct E \ftr \ct B $ be a functor.
For a class of arrows $ \Theta $ in \ct{B},
say that an arrow $ \phi $ in \ct{E} is \dfn{over $ \Theta $}
if $ p(\phi) \in \Theta $.
Recall from~\cite{Streicher:fibc} that 
an arrow $ \varphi : A \to B $ is \dfn{\fibepic{p}} if,
for every pair $ \psi,\psi' : B \to B' $ such that $ p(\psi) = p(\psi') $,
whenever $ \psi \varphi = \psi' \varphi $ it is already $  \psi = \psi' $.

\begin{rem}
\begin{enumerate}[label=(\roman*)]
\item
When $ p $ is a fibration, $\varphi$ is \fibepic{p} \iff
$\psi\varphi=\psi'\varphi$ implies $\psi=\psi'$
just for vertical arrows $\psi$ and $\psi'$.
\item
Every cocartesian arrow is \fibepic{p}.
\item
An arrow $\varphi : A \to B$ that factors as a cocartesian arrow
followed by a vertical $\upsilon$, is \fibepic{p} if and only if $\upsilon$ is \fibepic{p}.
Moreover, if $p$ is a fibration, this happens if and only if $\upsilon$ is epic in the fibre $\ct{E}_{p(B)}$.
\end{enumerate}
\end{rem}

Assume from now on that $ p : \ct{E} \ftr \ct{B}$ is a fibration.
We need to introduce a few definitions
that will be instrumental in formulating the main \thx\ref{mainthm}

It is well-known that,
whenever there is a commuting square in \ct{E}
with two opposite arrows cartesian and
sitting over a pullback square in \ct{B}
\begin{equation}\label{prd-stab}
\xycentre[@=3em]{
A'	\ar[d]_{\varphi'} \xycar[r]	&	A	\ar[d]^-{\varphi\quad}="E"
&&	W'	\ar[r] \ar[d]_-{\quad}="B"	&	W	\ar[d]
\\
B'	\xycar[r]	&	B
&&	Z'	\ar[r]	&	Z	\ar@{|->}"E";"B"^p	}
\end{equation}
then the left-hand square in~\eqref{prd-stab} is a pullback too.

We say that a class $ \Phi $ of arrows in \ct{E} is \dfn{\BCarL} when,
in every diagram \eqref{prd-stab} where the right-hand pullback is of the form
\[\xymatrix@=3em{
U\times Y\ar[r]^{g\times Y}\ar[d]_-{U\times f}&V\times Y\ar[d]^-{V\times f}\\
U\times X\ar[r]_-{g\times X}&V\times X}\]
and $\varphi$ is in $\Phi$, also $\varphi'$ is in $\Phi$.
In such a situation,
we may say that $\varphi'$ is a \dfn{\BCop of $\varphi$ along $g$}.

\begin{lem}\label{blem}
Suppose \ct{B} has binary products and $p:\ct{E}\ftr\ct{B}$ is a
fibration with left adjoints to reindexing along arrows in
\gdiagcl. Let $\Phi$ be the class of cocartesian lifts of arrows in
\gdiagcl, which exist thanks to \rmx\ref{brem}.
Then the following are equivalent:
\begin{enumerate}[label={\normalfont(\roman*)},ref={\normalfont(\roman*)}]
\item\label{uno} The class $\Phi$ is \BCarL.
\item\label{due} The left adjoints satisfy the Beck-Chevalley Condition.
\end{enumerate}  
\end{lem}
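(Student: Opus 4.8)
The plan is to use \rmx\ref{brem} to realise the left adjoints as codomains of cocartesian lifts, and thereby translate the Beck--Chevalley Condition of~\ref{due} directly into the product-stability of~\ref{uno}. First I would match the two quantifications. Any arrow of $\Phi$ lies over a parametrised diagonal $\pr{1,2,2}:Z\times X\to Z\times X\times X$; so if such an arrow occurs as the right-hand vertical $\varphi$ of a diagram~\eqref{prd-stab} whose base is a pullback of the product form in the definition of a \BCarL class, that base pullback is forced to be exactly a Beck--Chevalley square, with $f\times X$ and $f\times X\times X$ on the horizontals and $\pr{1,2,2}$ on both verticals. Conversely every Beck--Chevalley square is of this product form. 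Moreover, by \rmx\ref{brem} every arrow of $\Phi$ is, up to vertical iso, a cocartesian lift $\theta_A:A\to\ladjD{Z,X}A$ for a uniquely determined $A$ over $Z\times X$. Hence the instances of~\eqref{prd-stab} relevant to~\ref{uno} are precisely those built from a Beck--Chevalley square and an object $A$ over $Z\times X$, and the quantifier over $A$ in~\ref{due} corresponds to the quantifier over $\varphi\in\Phi$ in~\ref{uno}.

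Next, for a fixed Beck--Chevalley square and object $A$ over $Z\times X$ I would assemble the commuting square in \ct{E} lying over it,
\[\xymatrix@=3em{
\ridx{(f\times X)}A \ar[d]_{\varphi'} \xycar[r]^-{\kappa} & A \ar[d]^{\theta_A} \\
\ridx{(f\times X\times X)}{\ladjD{Z,X}A} \xycar[r]_-{\lambda} & \ladjD{Z,X}A
}\]
where $\kappa=(f\times X)\carl{A}$ and $\lambda$ is the cartesian lift of $f\times X\times X$ into $\ladjD{Z,X}A$, and $\varphi'$ is the unique arrow over $\pr{1,2,2}$ with $\lambda\varphi'=\theta_A\kappa$. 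This is exactly the instance of~\eqref{prd-stab} with $\varphi=\theta_A\in\Phi$, so~\ref{uno} says precisely that $\varphi'\in\Phi$, \ie that $\varphi'$ is cocartesian. Writing $\theta_{\ridx{(f\times X)}A}$ for the cocartesian lift of $\pr{1,2,2}$ at $\ridx{(f\times X)}A$, I would then factor $\theta_A\kappa$ first through $\theta_{\ridx{(f\times X)}A}$ (by cocartesianness) and then through $\lambda$ (by cartesianness) to produce a vertical arrow
\[c:\ladjD{Y,X}\ridx{(f\times X)}A\to\ridx{(f\times X\times X)}{\ladjD{Z,X}A}\]
satisfying $\varphi'=c\,\theta_{\ridx{(f\times X)}A}$ by construction. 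The hard part is to check that this $c$ is the canonical Beck--Chevalley comparison of~\ref{due}; this is where \rmx\ref{brem} is used a second time, to unwind the mate of the two adjunctions into the cartesian and cocartesian lifts, and it is the only genuinely fiddly point of the argument.

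With the identity $\varphi'=c\,\theta_{\ridx{(f\times X)}A}$ and the identification of $c$ in hand, both implications become formal consequences of the uniqueness of cocartesian lifts up to vertical iso. For~\ref{due}\Implies\ref{uno}: if $c$ is iso then $\varphi'$ is a cocartesian lift post-composed with an iso, hence cocartesian, so $\varphi'\in\Phi$ and $\Phi$ is \BCarL. For~\ref{uno}\Implies\ref{due}: if $\varphi'\in\Phi$ then $\varphi'$ and $\theta_{\ridx{(f\times X)}A}$ are two cocartesian lifts of $\pr{1,2,2}$ at the same object, so the unique vertical arrow between their codomains---which the previous step shows is $c$---must be iso; as $A$ and the square are arbitrary, the Beck--Chevalley Condition holds.
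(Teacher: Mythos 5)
Your proposal is correct and takes essentially the same route as the paper: both directions hinge on \rmx\ref{brem}, lifting the Beck--Chevalley square to the square of cartesian arrows and cocartesian lifts in \ct{E}, and reading off that the comparison arrow is iso exactly when the induced left vertical is cocartesian. Your explicit construction of $c$ and the identity $\varphi'=c\,\theta_{\ridx{(f\times X)}A}$ is just a more detailed rendering of what the paper compresses into ``the statement follows by \rmx\ref{brem}'', so the step you flag as fiddly is precisely the (standard) verification the paper also leaves implicit.
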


\begin{proof}
\ref{uno}\Implies\ref{due}
Given $f:Y\to Z$ and an object $A$ over $Z\times X$, in the
commutative diagram
\[\xymatrix{\ridx{(f\times X)}{A}\xycar[r]\ar@{-->}[d]&A\xycocar[d]\\
\ridx{(f\times X\times X)}{(\ladjD{Z,X}A)}\xycar[r]&\ladjD{Z,X}A}\]
the dotted arrow is cocartesian by \ref{uno}.
The statement follows by \rmx\ref{brem}.

\noindent\ref{due}\Implies\ref{uno} Consider a diagram like in
(\ref{prd-stab}) for $f$ in \gdiagcl and $\varphi$ cocartesian over
it:
\[\xymatrix@C=3em@R=3em{
A'\ar[d]_{\varphi'}\xycar[r]&A\xycocar[d]^-{\varphi\quad}="E"
&&U\times X\ar[rr]^{g\times X}\ar[d]_-{\ \pr{1,2,2}}="B"&&
V\times X\ar[d]^-{\pr{1,2,2}}\\
B'\xycar[r]&\strut B
&&U\times X\times X\ar[rr]_-{g\times X\times X}&&
V\times X\times X\ar@{|->}"E";"B"^-p}
\]
So, by \rmx\ref{brem}, it is $B\cong\ladjD{V,X}(A)$. Hence
$B'\cong\ladjD{U,X}(A')$  by \ref{due} which yields that also
$\varphi'$ is cocartesian, again by \rmx\ref{brem}.
\end{proof}

We say that $ \Phi $ is \dfn{\FRar}
if, for every $\varphi:A\to B$ and every cartesian arrow $\psi:C'\car
C$ over $p(\varphi)$,
the arrow $\varphi\land\psi \colon= < \varphi \pi_1, \psi \pi_2>  :A\land C'\to B\land C$ is in $\Phi$
whenever $\varphi$ is.
\[\xymatrix{
A\ar[d]_{\varphi}&A\land C'\ar[l]\ar[d]^{\varphi\land\psi}\ar[r]&
C'\xycar[d]^{\psi}\\
B&B\land C\ar[l]\ar[r]&C}\]

\begin{lem}\label{dlem}
Let $p:\ct{E}\ftr\ct{B}$ be a
fibration with finite products and
suppose that it has left adjoints to reindexing along arrows in \gdiagcl.
Let $\Phi$ be the class of cocartesian lifts of arrows in \gdiagcl,
which exist thanks to \rmx\ref{brem}.
Then the following are equivalent:
\begin{enumerate}[label={\normalfont(\roman*)},ref={\normalfont(\roman*)}]
\item\label{duno} The class $\Phi$ is \FRar.
\item\label{ddue} The left adjoints satisfy the Frobenius Reciprocity.
\end{enumerate}  
\end{lem}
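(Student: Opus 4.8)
The plan is to mirror the proof of \lmx\ref{blem}, collapsing both conditions onto a single statement about the cocartesianness of a paired arrow and reading it off through \rmx\ref{brem}. Fix $Y,X$ in \ct{B}, an object $A$ over $Y\times X$ and an object $B$ over $Y\times X\times X$. I would write $\varphi_A : A \to \ladjD{Y,X}A$ for the cocartesian lift of $\pr{1,2,2}$ at $A$, which lies in $\Phi$ by \rmx\ref{brem}, and $\psi_B : \ridx{\pr{1,2,2}}{B} \car B$ for the cartesian lift of $\pr{1,2,2}$ at $B$. Pairing them yields an arrow $\varphi_A \land \psi_B$ over $\pr{1,2,2}$ which, after composing with the symmetry isomorphisms of the binary products in the fibres, is exactly the arrow from which the Frobenius comparison is assembled. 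This is the instance of \FRar that matches the Frobenius datum once $C$ in \dfx(the definition before the lemma) is taken to be $B$.

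The technical heart, which I would isolate first, is the identification of the canonical Frobenius morphism with the factorisation coming from a cocartesian lift. Let $\theta : \ridx{\pr{1,2,2}}{B}\land A \to \ladjD{Y,X}(\ridx{\pr{1,2,2}}{B}\land A)$ be the cocartesian lift of $\pr{1,2,2}$ at the fibre product, and let
\[
\kappa : \ladjD{Y,X}(\ridx{\pr{1,2,2}}{B} \land A) \ftr B \land \ladjD{Y,X}A
\]
be the canonical Frobenius arrow, defined as the transpose under $\ladjD{Y,X}\dashv\ridx{\pr{1,2,2}}{}$ of the evident map $\ridx{\pr{1,2,2}}{B}\land A \to \ridx{\pr{1,2,2}}{(B\land\ladjD{Y,X}A)}$ built from the unit $\eta_A$ (using that reindexing preserves products). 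Unwinding the factorisation of cocartesian lifts in \rmx\ref{brem} and transposing, one checks that $\kappa$ is the unique vertical arrow with $\kappa\,\theta = \varphi_A \land \psi_B$, read through the symmetry isomorphisms. I expect this identity to be the main obstacle, as it requires matching the adjoint transpose defining $\kappa$ against the unit part of $\theta$; it is, however, routine given \rmx\ref{brem}, and the terse style of \lmx\ref{blem} suggests it can be presented compactly.

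Granting this, the equivalence is immediate in both directions. For \ref{duno}\Implies\ref{ddue}, pairability makes $\varphi_A \land \psi_B$ cocartesian over $\pr{1,2,2}$; since $\theta$ is also cocartesian over $\pr{1,2,2}$ at the same object and $\kappa\,\theta = \varphi_A \land \psi_B$, uniqueness of cocartesian lifts (\rmx\ref{brem}) forces the factorisation $\kappa$ to be an isomorphism, which is precisely Frobenius Reciprocity. For \ref{ddue}\Implies\ref{duno}, an arbitrary $\varphi$ in $\Phi$ and an arbitrary cartesian $\psi$ differ from the canonical lifts $\varphi_A$ and $\psi_B$ only by isomorphisms on domains and codomains, and cocartesianness is invariant under such isomorphisms, so it suffices to treat $\varphi_A \land \psi_B$. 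By hypothesis $\kappa$ is an isomorphism, whence $\varphi_A \land \psi_B = \kappa\,\theta$ is an isomorphism composed with a cocartesian arrow, hence cocartesian; thus $\varphi_A \land \psi_B$ lies in $\Phi$, establishing \FRar.
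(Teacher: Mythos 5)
Your proposal is correct and follows essentially the same route as the paper: both reduce the equivalence to the observation that the paired arrow $\varphi_A\land\psi_B$ over $\pr{1,2,2}$ is cocartesian precisely when the canonical Frobenius comparison into $B\land\ladjD{Y,X}A$ is an isomorphism, via \rmx\ref{brem}. The only difference is one of exposition: you spell out the identification $\kappa\,\theta=\varphi_A\land\psi_B$ and the reduction to canonical lifts up to isomorphism, both of which the paper's terser proof leaves implicit.
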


\begin{proof}
For a parametrised diagonal
$\pr{1,2,2}:Y\times X\to Y\times X\times X$ and an object $A$ over
$Y\times X$, consider a commutative diagram
\[\xymatrix{A\xycocar[d]_{\varphi}&
A\land\ridx{\pr{1,2,2}}{(B)}\ar[l]\ar[d]^{}\ar[r]&
\ridx{\pr{1,2,2}}{(B)}\xycar[d]^{}\\
\ladjD{Y,X}(A)&\ladjD{Y,X}(A)\land B\ar[l]\ar[r]&B}\]
By \rmx\ref{brem} the middle arrow is cocartesian if and only if
$\ladjD{Y,X}(A)\land B\cong
\ladjD{Y,X}(A\land\ridx{\pr{1,2,2}}{(B)}$. Hence the statement
follows.
\end{proof}

Let $r:Y\to X$ and $s:X\to Y$ be a retraction pair in \ct{B} with
$rs=\id X$ and let
$\rho:C'\car C$ and $\sigma:C\car C'$ be cartesian over $r$ and $s$
respectively so that $\rho\sigma=\id{C}$.
Given $\varphi:A\to B$ over $s$,
we say that $ \sigma \land \varphi : C \land A \to C' \land B $
is a \dfn{\FRop of $ \varphi $ with $ C $}.

Given a retraction pair $ r : Y \to X $ and $s : X \to Y$ in \ct{B},
and an arrow $ \varphi : A \to B $ over $ V \times s : V \times X \to V \times Y $,
let $ \BCFRcl s r (\varphi) $ be
the class of all arrows obtained from $ \varphi $
by first a \BCop and then a \FRop.
More specifically, the class $ \BCFRcl s r (\varphi) $ consists of
those arrows $\psi$ fitting in a commutative diagram
\begin{equation}\label{BCFR}
\xycentre{
&&A\ar[rr]^{\varphi}&&B&&\\
&A'\xycar[ur]\ar[rr]^{\varphi'}&&B'\xycar[ur]&&&\\
\ar@{|->}[ddd]_p&
C\land A'\ar[u]^{\pi_2}\ar[d]_{\pi_1}\ar[rr]^{\psi}&&
C'\land B'\ar[u]_{\pi_2}\ar[d]^{\pi_1}&&&\\
&C\xycar[rr]^-{\sigma}\ar@<-1ex>@/_1.5em/[rrrr]|-{\,\id\,}
&&C'\xycar[rr]^-{\rho}
&&C&\\
&&&&&&\\
&&V\times X\ar[rr]_{V\times s}
&&V\times Y
\\
&U\times X\ar[ur]|-{\,g\times X\,}
\ar[rr]^{U\times s}\ar@/_1.5em/[rrrr]|-{\,\id\,}&&
U\times Y\ar[ur]|-{\,g\times Y\,}\ar[rr]^{U\times r}
&&U\times X
}
\end{equation}
In the following, we shall only be concerned with those classes of arrows $\BCFRcl{s}{r}(\phi)$
for which $r = \pr{1}: X \times X \to X$ and $s = \pr{1,1}: X \to X \times X$,
and $\phi$ is over the class \gdiagcl of parametrised diagonals.

\begin{rem}
Note that the class $\BCFRcl s r (\varphi) $ is closed under isomorphism.
\end{rem}

We are at last in a position to state the main result of the paper.

\begin{thm} \label{mainthm}
Let $p:\ct{E}\ftr\ct{B}$ be a fibration with products.
The following are equivalent:
\begin{enumerate}[label={\normalfont(\roman*)},ref={\normalfont(\roman*)}]
\item	\label{E1}
The fibration $p:\ct{E}\ftr\ct{B}$ is elementary.
\item	\label{E2}
\begin{innerenu}{\ref{E2}}
\item	\label{E2coca}
Every arrow in \gdiagcl has all cocartesian lifts.
\item	\label{E2BCFR}
The cocartesian arrows over \gdiagcl are \BCar and \FRar.
\end{innerenu}
\item	\label{E3}
\begin{innerenu}{\ref{E3}}
\item	\label{E3coca}
For every object $ X $ in \ct{B} there is
an object $ \eqobj X $ over $X\times X$
and a cocartesian arrow $ \cocD X : \trmf X \to \eqobj X $
over $ \pr{1,1}:X\to X\times X$.
\item	\label{E3BCFR}
The cocartesian arrows over \gdiagcl are \BCar and \FRar.
\end{innerenu}
\item	\label{E4}
\begin{innerenu}{\ref{E4}}
\item	\label{E4prdtrsp}
The fibration $p$ has strict productive transporters.
\item	\label{E4pepi}
Let $\cocD X : \tt_X \to \eqobj X$ denote
the loop for the transporter on $X$.
Every arrow in $\cocaDpcl X$ is \fibepic{p} over \gdiagcl.
\end{innerenu}
\item\label{E5}
\begin{innerenu}{\ref{E5}}
\item\label{E5eq}
For every $X$ in \ct{B} there are
an object $\eqobj X$ over $X\times X$
and an arrow $\cocD X:\trmf X\to\eqobj X$
over $\pr{1,1}:X\to X\times X$.
\item\label{E5coca}
For every object $X$ in \ct{B},
the arrows in $\cocaDpcl X$ are cocartesian over \gdiagcl.
\end{innerenu}
\item\label{E6}
For every $ X $ in \ct{B} there is an object \eqobj{X} over
$X\times X$ such that,
for every $Y,X \in \ct B $ and every $ A $ over $ Y \times X $,
the assignment 
\[\xymatrix{
A\ar@{|->}[r]&\desdp{X\times X} A}\]
gives rise to a left adjoint to the reindexing functor
$\ridx{\pr{1,2,2}}:\ct{E}_{Y\times X\times X}\ftr\ct{E}_{Y\times X}$.
\end{enumerate}
\end{thm}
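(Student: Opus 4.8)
The plan is to prove the equivalence $\ref{E1}\Leftrightarrow\ref{E2}$ and then close the cycle $\ref{E2}\Rightarrow\ref{E3}\Rightarrow\ref{E4}\Rightarrow\ref{E5}\Rightarrow\ref{E6}\Rightarrow\ref{E1}$, treating ``cocartesian over \gdiagcl'' and ``left adjoint to reindexing along \gdiagcl'' as interchangeable by \rmx\ref{brem}. The equivalence $\ref{E1}\Leftrightarrow\ref{E2}$ is essentially formal: \rmx\ref{brem} turns the existence of the left adjoints $\ladjD{Y,X}$ into \ref{E2coca}; and, granting that existence, \lmx\ref{dlem} identifies Frobenius Reciprocity with the cocartesian arrows over \gdiagcl being \FRar, while \lmx\ref{blem} identifies the Beck--Chevalley Condition with their being \BCar, the conjunction being \ref{E2BCFR}. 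The step $\ref{E2}\Rightarrow\ref{E3}$ is a mere specialisation: \ref{E3coca} is the instance $Y=1$, $A=\trmf X$ of \ref{E2coca} (where $\pr{1,2,2}:1\times X\to 1\times X\times X$ is the diagonal $\pr{1,1}$), and \ref{E2BCFR} and \ref{E3BCFR} coincide.

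For $\ref{E3}\Rightarrow\ref{E4}$ I would first record that, by \ref{E3coca}, the loop $\cocD X$ is a cocartesian lift over \gdiagcl, and that $\cocaDpcl X=\BCFRcl{\pr{1,1}}{\pr 1}(\cocD X)$ is by definition the orbit of $\cocD X$ under one \BCop followed by one \FRop. Since by \ref{E3BCFR} the cocartesian arrows over \gdiagcl are closed under these two operations, every member of $\cocaDpcl X$ --- in particular every $\cocaDp Y A$ --- is cocartesian. The strict transporter required by \ref{E4prdtrsp} is then read off: the strict carrier $\trsp A$ is the unique arrow over $\pr 2$ with $\trsp A\cocaD A=\id A$ delivered by the universal property of the cocartesian $\cocaD A$ applied to $\id A$, which lies over $\pr 2\,\pr{1,1}=\id X$; and strict productivity of the loops follows by the same device, since \rmx\ref{prdcocD} together with closure of cocartesian arrows under reindexing and composition shows $\cocD X\glprd\cocD Y$ to be cocartesian over the diagonal of $X\times Y$, whence there is a unique vertical $\chi_{X,Y}$ with $\chi_{X,Y}(\cocD X\glprd\cocD Y)=\cocD{X\times Y}$. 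Finally \ref{E4pepi} holds because every cocartesian arrow is \fibepic{}.

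The substance of the theorem is $\ref{E4}\Rightarrow\ref{E5}$, where cocartesianness of each $\cocaDp Y A$ must be recovered from the strict productive transporter and the hypothesis \ref{E4pepi} alone, with no cocartesian lift assumed in advance. The uniqueness half of the universal property is immediate: two fillers lying over a common base arrow that agree after precomposition with $\cocaDp Y A$ must coincide, precisely because $\cocaDp Y A$ is \fibepic{p} over \gdiagcl by \ref{E4pepi}. Existence is the hard part, and I expect it to be the main obstacle. The strict parametrised carrier $\trspp Y A$ of \prx\ref{partrsp}, satisfying $\trspp Y A\cocaDp Y A=\id A$, exhibits $\cocaDp Y A$ as a split mono and at once supplies the filler over those base arrows that factor through the diagonal; but, transporting data only onto the diagonal, it does not by itself produce the filler required by the universal property over a general base arrow. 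To reach the general case one invokes strict productivity in the form announced in \rmx\ref{mah}: the carrier for the transporter on $X\times Y$ produces a canonical arrow $\omega_{X,Y}:\eqobj{X\times Y}\to\eqobj X\glprd\eqobj Y$ with $\omega_{X,Y}\cocD{X\times Y}=\cocD X\glprd\cocD Y$, and, combined with the strict-productivity triangle $\chi_{X,Y}(\cocD X\glprd\cocD Y)=\cocD{X\times Y}$ and local-epiness, this forces $\chi_{X,Y}$ and $\omega_{X,Y}$ to be mutually inverse, so that $\eqobj{X\times Y}\cong\eqobj X\glprd\eqobj Y$. This identification is what lets one split an equality proof on a product into equality proofs on its factors and so reduce the missing fillers to the diagonal case already handled by the carrier. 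The reindexing bookkeeping in this reduction, and the verification that the filler so obtained is well defined and natural, are the points I expect to be most laborious.

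The remaining implications are comparatively light. For $\ref{E5}\Rightarrow\ref{E6}$ one observes that, by \ref{E5coca}, each $\cocaDp Y A$ is a cocartesian lift of $\pr{1,2,2}$ at $A$ with codomain $\desdp X A$, so \rmx\ref{brem} gives $\ladjD{Y,X}A\cong\desdp X A$, which is exactly \ref{E6}. For $\ref{E6}\Rightarrow\ref{E1}$ it remains to verify that this explicit left adjoint satisfies Frobenius Reciprocity and the Beck--Chevalley Condition; both are routine fibre computations with the formula $\desdp X A=(\ridx{\pr{1,2}}{A})\land(\ridx{\pr{2,3}}{\eqobj X})$, using that reindexing preserves finite meets and that the factor $\eqobj X$ does not depend on the parameter, as is already visible in \rxx\ref{runex-sec}(a). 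This closes the cycle and establishes the theorem.
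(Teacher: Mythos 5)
Your skeleton is exactly the paper's --- \ref{E1}$\Iff$\ref{E2} via \rmx\ref{brem}, \lmx\ref{blem} and \lmx\ref{dlem}; then \ref{E2}$\Implies$\ref{E3}$\Implies$\ref{E4}$\Implies$\ref{E5}; \ref{E5}$\Iff$\ref{E6} by \rmx\ref{brem}; and \ref{E6}$\Implies$\ref{E1} --- and your arguments for \ref{E1}$\Iff$\ref{E2}, \ref{E2}$\Implies$\ref{E3}, \ref{E3}$\Implies$\ref{E4} and \ref{E5}$\Implies$\ref{E6} coincide with the paper's. The problem lies in the implication you yourself call the substance of the theorem: in \ref{E4}$\Implies$\ref{E5} you establish uniqueness of fillers (correctly, from \ref{E4pepi}), but existence is left open, and the route you sketch for it does not close the gap.

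Two things go wrong there. First, the difficulty is misdiagnosed: since $p$ is a fibration, $\cocaDp{Y}{A}$ is cocartesian as soon as every $\phi : A \to B$ over the \emph{same} arrow $\pr{1,2,2}$ factors through it by a (necessarily unique) vertical arrow; fillers over arbitrary base arrows then come for free by factoring through cartesian lifts. So what is missing is not fillers ``over a general base arrow'' but fillers into a general \emph{codomain} $B$ over $Y\times X\times X$. Second, your remedy cannot supply them: the carrier at the domain only exhibits $\cocaDp{Y}{A}$ as a split mono, and no instance of the filler problem --- not even the unparametrised case $Y\cong 1$ --- is ``already handled by the carrier'', because a filler into $B$ requires a transport \emph{in $B$}, i.e.\ a carrier at the codomain. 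That is precisely the paper's key move: writing $\vrt\phi$ for the vertical factorisation of $\phi$ through a cartesian lift $\beta : \ridx{\pr{1,2,2}}{B}\car B$, the filler is the composite
\[
\phi' \;=\; \trspp{Y\times X}{B}\circ\gamma\circ\bigl((\ridx{\pr{1,2}}{\vrt\phi})\land\id{}\bigr) \;:\; \desdp{X}{A}\to B,
\]
where $\gamma$ is cartesian over $\pr{1,2,2,3}$ and $\trspp{Y\times X}{B}$ is the \emph{strict parametrised carrier at $B$}, regarded as an object over $(Y\times X)\times X$; it exists by \prx\ref{partrsp} --- which, not the iso $\eqobj{X\times Y}\cong\eqobj{X}\glprd\eqobj{Y}$, is where productivity of the transporters actually enters --- and the strictness identity $\trspp{Y\times X}{B}\,\cocaDp{Y\times X}{B}=\id{B}$ is what yields $\phi'\cocaDp{Y}{A}=\phi$. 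Your plan never invokes a carrier at the codomain, so it stops short of this construction; your $\omega_{X,Y}$/$\chi_{X,Y}$ mutual-inverse observation is correct, but it is a by-product (the paper records it in a remark \emph{after} \thx\ref{mainthm}), not an ingredient. A smaller gap of the same nature occurs in \ref{E6}$\Implies$\ref{E1}: Frobenius reciprocity there reduces to $(\ridx{\pr{1,2}}{(\ridx{\pr{1,2,2}}{B})})\land\ridx{\pr{2,3}}{\eqobj{X}}\cong B\land\ridx{\pr{2,3}}{\eqobj{X}}$, a transport statement which does not follow merely from reindexing preserving meets; the paper flags the observation needed (the comparison of the two left adjoints along $\pr{1,2,1,2}$).
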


\begin{proof}
\ref{E1}\Iff\ref{E2}
By \rmx\ref{brem}, the equivalence follows from \lmx\ref{blem} and
\lmx\ref{dlem}.

\noindent\ref{E2}\Implies\ref{E3}
The object $\eqobj X$ over $X \times X$ and the arrow $\cocD X$
are obtained taking a cocartesian lift of $\pr{1,1}:X\to X\times X$
from \trmf X.

\noindent\ref{E3}\Implies\ref{E4}
We begin constructing a strict \transporter on an object $X$.
To this aim,
it is enough to construct a carrier \trsp{A} for $A$ over $X$.
Note that the arrow \cocaD{A} from \ntx\ref{nota-cocar} is
obtained from \cocD{X} by \FRop, hence \cocaD{A} is in \cocaDpcl{X},
therefore cocartesian.
The universal property of
\cocaD{A} yields a unique arrow \trsp{A}
as in the diagram
\[\xymatrix{
A	\ar[dr]_{\id A}\xycocar[rr]^{\cocaD A}
&&	\desd X A	\ar@{-->}[dl]^{\trsp A}	\ar@{}[d]^(.7){\qquad}="E"
&&	X\ar@{}[d]_(.7){\quad}="B"\ar[dr]_{\pr 1}\ar[rr]^{\pr{1,1}}
&&	X\times X\ar[dl]^{\pr 2}
\\
&A	&&&&	X	&	\ar@{|->}"E";"B"_-p	}\]
To prove that this choice of \transporter{}s is productive,
let $ X,Y \in \ct B $.
We can rewrite  the arrow
$\cocD X\glprd\cocD Y : \top_X \to \eqobj{X}\boxtimes\eqobj{Y}$ as the
composite 
$<\sigma,\cocD Y'!>\cocD X'$ with the notation in \rmx\ref{prdcocD}.
The diagram therein also shows that $\cocD X'$ and $<\sigma,\cocD Y'!>$
are in $\cocaDpcl X$ and $\cocaDpcl Y$,
respectively.
It follows that each is cocartesian,
thus so is $ \cocD X \glprd \cocD Y $. 
Its universal property applied to $ \cocD{X \times Y} $
then yields the required $ \chi_{X,Y} $.

To prove condition \ref{E4pepi},
recall that arrows in \cocaDpcl X are obtained from \cocD{X}
first by \BCop and then with a \FRop,
as in diagram \eqref{BCFR}.
Since \cocD{X} is a cocartesian arrow over \gdiagcl
and these are \BCar and \FRar,
arrows in $ \cocaDpcl X $ are cocartesian, in particular \fibepic{p}.

\noindent\ref{E4}\Implies\ref{E5}
There is only condition \ref{E5coca} to prove; we need to show that, given $X$
in \ct{B}, arrows in $\cocaDpcl X$ are cocartesian.
These are obtained by \BCop of \cocD{X} along any projection
$\pr{2,3}:Y\times X\times X\to X\times X$, and then by \FRop.
Hence, for every $A$ over $Y\times X$,
there is exactly one arrow $A\to\desdp XA$ in \cocaDpcl{X} which is
$\cocaDp YA\colon=<\pr{1,2,2}\carl{(\ridx{\pr{1,2}} A)},\delta>$
introduced in \ntx\ref{bnota}.
Let $ \phi : A \to B $ be an arrow over $ \pr{1,2,2} : Y \times X \to Y \times X \times X $
and consider the following diagram
\begin{equation}\label{cocaDp}
\xycentre[@C=1em]{
A	\ar[d]_{\vrt\phi} \ar@/^/[ddr]^(.75){\phi} \ar[rr]^-{\cocaDp Y A}	&&%
	\desdp X A	\ar[d]^{(\ridx{\pr{1,2}} \vrt\phi) \land \id{}}	&\\
\ridx{\pr{1,2,2}} B	\xycar[dr]_-{\beta} \ar[rr]^(.45){\cocaD{(\ridx{\pr{1,2,2}} B)}}|(.25){\hole}	&&%
	\desdgp{X}{B}{\pr{2,3}}{\pr{1,2,2}'}	\xycar[dr]^-{\gamma}	&\\
&	B	&&	\desdgp{X}{B}{\pr{3,4}}{\pr{1,2,3}}	\ar[ll]_-{\trspp{Y \times X} B}	\\
Y \!\times\! X	\ar[dr]_-{\pr{1,2,2}} \ar[rr]^-{\pr{1,2,2}}	&&%
	Y \!\times\! X \!\times\! X	\ar[dr]^-{\pr{1,2,2,3}} \ar[dl]|-{\,\id{}\,}	&\\
&	Y \!\times\! X \!\times\! X	&&	Y \!\times\! X \!\times\! X \!\times\! X,	\ar[ll]^-{\pr{1,2,4}}	}
\end{equation}
where $ \pr{1,2,2}' = \pr{1,2,2} \pr{1,2} : Y \times X \times X \to Y \times X \to Y \times X \times X $,
the arrow \trspp{Y \times X}{B} is a strict parametrised carrier at $B$ obtained by \prx\ref{partrsp}
and $ \beta $ and $ \gamma $ are cartesian.
Commutativity of the upper square with vertex $ B $ follows from
$ \trspp{Y \times X} B \cocaDp{Y \times X} B = \id B $ and commutativity of the diagram below.
\[\xymatrix@R=1em@C=1.8em{
&	\trmf X	\ar[rrr]^{\cocD X}	&&&	\eqobj X
\\
\trmf{Y \times X}	\ar[rrr]|!{[dr];[ur]}{\hole} \xycar[dr] \xycar[ur]
&&&	\ridx{\pr{2,3}} \eqobj X	\xycar[dr] \xycar[ur]
&\\
&	\trmf{Y \times X \times X}	 \ar[rrr] \xycar[uu]
&&&	\ridx{\pr{3,4}} \eqobj X	\xycar[uu]
\\
\ridx{\pr{1,2,2}} B	\xycar[dr]|-{\,\beta\,}%
\ar[rrr]|!{[dr];[ur]}{\hole}|(.55){\,\cocaD{(\ridx{\pr{1,2,2}} B)}\,}%
\ar[uu] \xycar@/_2em/[ddrrr]|(.6){\hole\hole}
&&&	\desdgp{X}{B}{\pr{2,3}}{\pr{1,2,2}'}	\ar[dd]|-{\hole} \ar[uu]|-{\hole} \xycar[dr]|-{\,\gamma\,}
&\\
&	B	 \ar[rrr]|(.35){\,\cocaDp{Y \times X} B\,} \ar[uu] \xycar@/_2.5em/[ddrrr]	&&&%
	\desdgp{X}{B}{\pr{3,4}}{\pr{1,2,3}}	\ar[dd] \ar[uu]
\\
&&&	\ridx{\pr{1,2,2}} B	\xycar[dr]
&\\
&&&&	\ridx{\pr{1,2,3}} B
\\
&&&&\\
&	X	\ar[rrr]^{\pr{1,1}}	&&&	X \!\times\! X
\\
&&&&\\
Y \!\times\! X	\ar[rrr]|!{[dr];[ur]}{\hole}_(.6){\pr{1,2,2}} \ar[dr]_-{\,\pr{1,2,2}\,} \ar[uur]^{\pr2}
&&&	Y \!\times\! X \!\times\! X	\ar[dr]|{\pr{1,2,2,3}} \ar[uur]|{\pr{2,3}}
&\\
&	Y  \!\times\! X \!\times\! X	\ar[rrr]_-{\pr{1,2,3,3}} \ar[uuu]_(.65){\pr3}	&&&	Y \!\times\! X \!\times\! X \!\times\! X	 \ar[uuu]_{\pr{3,4}}
}
\]
Therefore diagram \eqref{cocaDp} commutes.
Since \cocaDp{Y}{A} is \fibepic{p}, it follows that there is exactly one vertical arrow
$ \phi' : \desdp X A \to B $
such that $ \phi' \cocaDp Y A = \phi $.
Hence \cocaDp{Y}{A} is cocartesian.

\noindent\ref{E5}\Iff\ref{E6}
This is just an instance of the equivalence in \rmx\ref{brem}.

\noindent\ref{E6}\Implies\ref{E1}
It is straightforward to verify that the left adjoints specified in \ref{E6}
satisfy the Beck-Chevalley condition and Frobenius reciprocity.
For the latter, a useful remark is that
the left adjoint to reindexing along 
$\pr{1,2,1,2}:X\times X\to X\times X\times X\times X$
\[\xymatrix{
A\ar@{|->}[r]&\ridx{\pr{1,2}}{A}\land \eqobj{X\times X}}\]
is isomorphic to
$\xymatrix@1{
A\ar@{|->}[r]&\ridx{\pr{1,2}}{A}\land (\eqobj{X}\boxtimes \eqobj{X})}$
as it follows from the fact that the diagram
\[\xymatrix@C=5em{
X\times X	\ar[d]_-{\pr{1,2,1,2}} \ar[r]_-{\pr{1,2,2}}
&	X\times X\times X	\ar[r]^-{\iso}_-{\pr{2,3,1}}
&	X\times X\times X	\ar[d]^-{\pr{1,2,3,3}}
\\
	X\times X\times X\times X
&&	X\times X\times X\times X	\ar[ll]_-{\iso}^-{\pr{3,1,4,2}}}\]
commutes.
\end{proof}

\begin{rem}
In an elementary fibration $p$,
the canonical arrow
$\omega_{X,Y} : \eqobj{X \times Y} \to \eqobj{X} \glprd \eqobj{Y}$ of
\rmx\ref{mah} is the inverse of $\chi_{X \times Y}$.
\end{rem}

\begin{rem}
Since faithful fibrations are equivalent to indexed posets,
the equivalence between condition \ref{E1} and condition \ref{E4} in
\thx\ref{mainthm} gives Proposition 2.4 of \cite{EmmeneggerJ:eledac}.
\end{rem}

\begin{prp}\label{cob}
If $p:\ct{E}\ftr\ct{B}$ is an elementary fibration, \ct{A} is a
category with finite products, and $F:\ct{A}\ftr\ct{B}$ is a
functor which preserves finite products, then the 
fibration $F^*p:F^*\ct{E}\ftr\ct{A}$ is also elementary.
\end{prp}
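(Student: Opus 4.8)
The plan is to unwind the pullback fibration and then verify \dfx\ref{elmfbr} directly, transporting the left adjoints from $p$ along the product-comparison isomorphisms of $F$. Recall that an object of $F^{*}\ct{E}$ is a pair $(a,E)$ with $a$ in \ct{A}, $E$ in \ct{E} and $F(a)=p(E)$, and a morphism $(a,E)\to(a',E')$ is a pair $(g,\varphi)$ with $F(g)=p(\varphi)$; the second projection $(a,E)\mapsto E$ is a functor $\bar F:F^{*}\ct{E}\ftr\ct{E}$ over $F$. A cleavage on $p$ induces one on $F^{*}p$ by lifting $g$ at $(a',E')$ to $(g,F(g)\carl{E'})$, so the fibre of $F^{*}p$ over $a$ is $\ct{E}_{F(a)}$ and reindexing along $g:a\to a'$ in $F^{*}p$ is reindexing along $F(g)$ in $p$. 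Consequently $F^{*}p$ is again a fibration with finite products: the base \ct{A} has them by hypothesis, each fibre $\ct{E}_{F(a)}$ has them because $p$ does, and they are preserved by reindexing since reindexing in $F^{*}p$ is reindexing in $p$.

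Next I would identify the reindexing that must admit a left adjoint. Fix $Y,X$ in \ct{A}. Product-preservation of $F$ supplies isomorphisms $\theta:F(Y\times X)\xrightarrow{\iso}F(Y)\times F(X)$ and $\theta':F(Y\times X\times X)\xrightarrow{\iso}F(Y)\times F(X)\times F(X)$ making the square relating $F(\pr{1,2,2})$ to the parametrised diagonal $\pr{1,2,2}:F(Y)\times F(X)\to F(Y)\times F(X)\times F(X)$ of \ct{B} commute. Reindexing along the isomorphisms $\theta,\theta'$ are equivalences of fibres, so modulo these equivalences, reindexing along $\pr{1,2,2}$ in $F^{*}p$ agrees with reindexing along the parametrised diagonal of \ct{B} at $F(Y)\times F(X)$. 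Since $p$ is elementary, the latter has a left adjoint $\ladjD{F(Y),F(X)}$; composing it with the adjoint equivalences coming from $\theta,\theta'$ produces the required left adjoint to reindexing along $\pr{1,2,2}$ in $F^{*}p$.

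It then remains to check the two conditions of \dfx\ref{elmfbr}. For the \textbf{Beck--Chevalley Condition}, a pullback square of parametrised diagonals in \ct{A} has all four arrows of the product form $f\times X$, $f\times X\times X$ and $\pr{1,2,2}$; applying $F$ together with the comparison isomorphisms turns it into the corresponding pullback square of parametrised diagonals in \ct{B}, precisely because $F$ preserves these products. Under this identification the canonical Beck--Chevalley comparison for $F^{*}p$ corresponds to the one for $p$, which is iso since $p$ is elementary. \textbf{Frobenius Reciprocity} is argued the same way: the fibrewise product $\land$ and the external product $\glprd$ occurring in the canonical arrow are computed in $\ct{E}_{F(\blank)}$ and match, under $\theta,\theta'$, the corresponding products for $p$, so the Frobenius comparison for $F^{*}p$ is identified with the iso Frobenius comparison for $p$. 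This establishes \dfx\ref{elmfbr} for $F^{*}p$.

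An alternative, perhaps slicker, route is to verify condition \ref{E2} of \thx\ref{mainthm}: the cocartesian universal property transfers verbatim along $\bar F$, so the cocartesian lifts in $F^{*}p$ of arrows in \gdiagcl are exactly the $\bar F$-preimages of the cocartesian lifts in $p$ of the (up to the above isomorphisms) parametrised diagonals $F(\pr{1,2,2})$, which exist by \rmx\ref{brem}; and the \BCar and \FRar properties transfer since they only refer to the product structure that $F$ preserves. The main obstacle in either route is purely bookkeeping: one must track the product-comparison isomorphisms of $F$ (which preserves products only up to coherent iso) and check that the parametrised diagonals, the fibrewise products $\land$ and $\glprd$, and the Frobenius and Beck--Chevalley comparisons are all matched correctly under them. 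No idea beyond this coherence check is needed.
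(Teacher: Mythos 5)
Your proposal is correct, but it takes a genuinely different route from the paper. You verify \dfx\ref{elmfbr} directly: you transport the left adjoints $\ladjD{F(Y),F(X)}$ along the product-comparison isomorphisms of $F$ and then check that the Beck--Chevalley and Frobenius comparisons for $F^*p$ are identified with those for $p$. The paper instead invokes its own characterisation, \thx\ref{mainthm}, via condition \ref{E4}: since $p$ is elementary it has strict productive \transporter{}s and satisfies \ref{E4pepi}, and both pieces of data are \emph{fibrewise} (loops, carriers, and the locally-epic condition all live in fibres and in hom-sets over arrows of the base), so they transfer to $F^*p$ essentially for free --- the only bookkeeping is the single isomorphism $\ple{F(\pr1),F(\pr2)}:F(V\times V)\isoa FV\times FV$ and the observation that the arrows in $\cocaDpcl{V}$ for $F^*p$ are arrows in $\BCFRcl{F(\pr{1,1})}{F(\pr1)}{(\cocD{FV})}$ for $p$. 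This is precisely what the characterisation buys: no adjoints, units, or canonical comparison arrows need to be chased through the comparison isomorphisms, whereas your direct verification (and, to a lesser extent, your alternative via \ref{E2}) must track them throughout. Conversely, your argument is self-contained and does not depend on \thx\ref{mainthm}. One small caution on your alternative route: cocartesian arrows in $F^*p$ over \gdiagcl are not literally ``exactly the $\bar F$-preimages'' of cocartesian arrows of $p$ --- the implication you can use is that if $\bar F(\phi)$ is cocartesian in $p$ then $\phi$ is cocartesian in $F^*p$ (the converse may fail when $F$ is not full); this one-directional transfer, together with uniqueness of cocartesian lifts up to isomorphism and closure of the relevant classes under isomorphism, is enough for \ref{E2}, but the statement as written is an overstatement.
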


\begin{proof}
Since $F$ preserves finite products, the fibration $F^*p$ has finite
products. To see that $F^*p$ is elementary, we apply
\thx\ref{mainthm}\ref{E4}. For $V$ an object in \ct{C}, a \transporter 
on $F(V)$ for the fibration $p$ is also a \transporter on $V$ for the
fibration $F^*p$ since
$\xymatrix@1{\ple{F(\pr1),F(\pr2)}:F(V\times V)\ar[r]^-{\iso}
&FV\times FV}$. Condition \ref{E4pepi} for $F^*p$ follows
directly from the same condition for $p$ since the arrows in
\cocaDpcl{V} are the arrows in
\BCFRcl{F(\pr{1,1})}{F(\pr1)}{(\cocD{FV})}.
\end{proof}

\section{Elementary fibrations from \awfs}
\label{Applications}

Consider the fibration $\cod\rst{\rightcl} : \rightcl \ftr \ct{C}$
associated to the subcategory \rightcl from
a weak factorisation system $(\leftcl,\rightcl)$
in \rxx\ref{runex-fib}(b).
As we saw in \xsx\ref{runex-trsp}(b),
the fibration $\cod\rst{\rightcl}$ has strict productive transporters
when the base category \ct{C} has finite products, \leftcl is closed
under products and closed under pullbacks along arrows in \rightcl. 
In this case, for every $(f_0,f_1) \in \cocaDpcl{X}$,
the arrow $f_1$ is in $\leftcl$.
Indeed, \refl{X} is in \leftcl by construction and, for every
$\cocaDp{Y}{A} = (\pr{1,2,2},\ple{\id{A},\refl{X} \pr2 a})$,
the arrow $\ple{\id{A},\refl{X} \pr2 a}$ is a pullback along an arrow in \rightcl
of a product of \refl{X} as in the diagram
\[\xymatrix@C=3em@R=3em{
A	\ar[d]_-a \ar[rr]_-{\ple{\id{A},\refl{X} \pr2 a}} \ar@/^2em/[rrrr]^-{\id A}
&&	A \times_X PX	\ar[d] \ar[rr]
&&	A	\ar[d]^-{a}
\\
Y \times X	\ar[rr]^-{Y \times \refl{X}} \ar@/_2em/[rrrr]_-{\id{Y \times X}}
&&	Y \times PX	\ar[r]
&	Y \times X \times X	\ar[r]^-{\pr{1,2}}	&	Y \times X
}\]
where the right-hand square is a pullback.

\begin{lem}\label{lem-strfs}
Let \arrcl{F} be a full subcategory of $\ct{C}\twr$ closed under
pullbacks. Given an arrow
in \arrcl{F}
\[\xymatrix@C=4em{
A\ar[d]_-a\ar[r]_-{f_1}&B\ar[d]^-b\\
X\ar[r]_-{f_0}&Y,}\]
the following are equivalent:
\begin{enumerate}[label={\normalfont(\roman*)}]
\item
The arrow $(f_0,f_1)$ is \fibepic{\cod\rst{\arrcl{F}}}.
\item
Every left lifting problem for $f_1$ against arrows in \arrcl{F} has
at most one solution.
\end{enumerate}
\end{lem}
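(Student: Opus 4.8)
The plan is to translate both conditions into statements about lifting $f_1$ against the objects of \arrcl{F} (regarded as arrows of \ct{C}) and then read off the equivalence.

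First I would unwind condition~(i). Since $\cod\rst{\arrcl{F}}$ is a fibration, the first item of the remark following the definition of locally epic arrows lets me test $(f_0,f_1)$ against \emph{vertical} arrows only. A vertical arrow out of $b$ is a morphism $(\id{Y},g):b\to c$ with $c:C\to Y$ in \arrcl{F} and $cg=b$, and composing it with $(f_0,f_1)$ replaces $g$ by $gf_1$. Hence being \fibepic{\cod\rst{\arrcl{F}}} says precisely that for every $c:C\to Y$ in \arrcl{F} and every pair $g,g':B\to C$ with $cg=cg'=b$ and $gf_1=g'f_1$ one has $g=g'$; equivalently, every lifting problem for $f_1$ against an arrow of \arrcl{F} \emph{whose lower edge is the fixed map $b$} has at most one solution. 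Condition~(ii) is the same assertion with the lower edge arbitrary, so the implication (ii)$\Rightarrow$(i) is immediate.

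For the converse I would begin from an arbitrary lifting problem
\[\xymatrix@C=3em{A\ar[d]_-{f_1}\ar[r]^-{u}&C\ar[d]^-c\\ B\ar[r]_-{v}&Z}\]
with $c$ in \arrcl{F} and two fillers $d,d':B\to C$, so $df_1=d'f_1=u$ and $cd=cd'=v$. Pulling $c$ back along $v$ yields $\bar c:\bar C\to B$ in \arrcl{F} (this is where closure under pullbacks enters) together with a cartesian $q:\bar C\car C$ over $v$; the two fillers then factor uniquely as sections $\bar d,\bar d':B\to\bar C$ of $\bar c$ with $q\bar d=d$, $q\bar d'=d'$, and joint monicity of the pullback cone gives $\bar d f_1=\bar d'f_1$, so that $d=d'$ will follow once I show $\bar d=\bar d'$. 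I have thereby reduced~(ii) to the uniqueness of sections of an object of \arrcl{F} over $B$ that agree along $f_1$, i.e.\ to a lifting problem whose lower edge is $\id{B}$.

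The remaining, and main, difficulty is that (i) controls lifts whose lower edge is $b$, whereas the reduced problem has lower edge $\id{B}$. To bridge the two I would exploit the fixed object $b:B\to Y$ of \arrcl{F}: a section $s$ of $\bar c$ is exactly a map $s:B\to\bar C$ with $(b\bar c)s=b$, that is, a lift of $f_1$ against the composite $b\bar c:\bar C\to Y$ whose lower edge is the very map $b$. Provided this composite again lies in \arrcl{F}, condition~(i) applies to $\bar d,\bar d'$ and forces $\bar d=\bar d'$, finishing the proof. The crux is thus precisely this passage from the base $B$ back to the base $Y$, namely the stability of \arrcl{F} under postcomposition with $b$: in the situations of interest, where \arrcl{F} is the right class of a weak factorisation system and hence closed under composition, it is automatic, and it is the step I would scrutinise most carefully when only closure under pullbacks is assumed.
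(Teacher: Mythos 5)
Your proof is essentially the paper's own: the same vertical-arrow reformulation of (i), the same immediate implication (ii)\Implies(i), and, for (i)\Implies(ii), the same two moves of pulling back along the lower edge and then re-basing the resulting problem over $Y$ by composing with $b$. (One small imprecision: when $b$ is not monic, a map $s:B\to\bar C$ with $(b\bar c)s=b$ need not be a section of $\bar c$, so ``exactly'' is too strong; but sections satisfying the top-edge constraint are among such maps, and that inclusion is all the uniqueness argument needs.)

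The step you single out is precisely the one the paper performs in silence: in its proof of (i)\Implies(ii), the two fillers $g,g'$ of the reduced problem against $c:C\to B$ are regarded as morphisms $(\id{Y},g),(\id{Y},g'):b\to bc$ of \arrcl{F}, which presupposes that the composite $bc$ is an object of \arrcl{F}; closure under pullbacks does not provide this. Your caution is warranted in the strongest sense: without some closure under composition the lemma is false. Take $\ct{C}=\Set\twr$, write objects as $E=(E_1\to E_0)$, and let \arrcl{F} consist of those squares $g:E\to F$ for which every fibre of $g_0$ over a point in the image of $F_1\to F_0$ has at most one element; pullbacks are computed componentwise and naturality sends image points to image points, so \arrcl{F} is closed under pullbacks. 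Let $Y=(1\to1)$, $B=(\emptyset\to1)$, $A=(\emptyset\to\emptyset)$, let $a:A\to Y$, $b:B\to Y$, $f_1:A\to B$ be the unique maps and $f_0=\id{Y}$; then $a,b\in\arrcl{F}$ and $(f_0,f_1):a\to b$ is a morphism of \arrcl{F}. Since $A$ is initial, condition (i) asserts exactly that any two morphisms $b\to c'$ of \arrcl{F} over the same $k:Y\to Z'$ coincide, and indeed such a morphism amounts to an element of the fibre of $c'_0$ over a point lying in the image of $Z'_1\to Z'_0$, of which there is at most one. But (ii) fails: the arrow $(\emptyset\to2)\to(\emptyset\to1)=B$ is in \arrcl{F} vacuously, and the lifting problem for $f_1$ against it with lower edge $\id{B}$ has two distinct solutions. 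So the lemma needs the extra hypothesis that \arrcl{F} be closed under composition (closure under postcomposition with $b$ suffices); this costs nothing downstream, since the lemma is only applied to right classes of weak factorisation systems, which are closed under composition.
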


\begin{proof}
(i)$\Implies$(ii)
It is enough to show that a lifting problem of the form
\[\xymatrix{
A	\ar[d]_-{f_1} \ar[r]^-h	&	C	\ar[d]^-{c}
\\
B	\ar[r]_{\id{B}}	&	B
}\]
for $c\in\arrcl{F}$ has at most one solution.
Let then $g,g' : B \to C$ be two diagonal fillers.
They fit in the diagram below
which commutes except for the two parallel arrows.
\[\xymatrix@C=4em{
A	\ar[d]_-a \ar[r]_-{f_1} \ar@/^2em/[rr]^-h
&	B	\ar[d]_-b \ar@<.5ex>[r]^-g \ar@<-.5ex>[r]_-{g'}
&	C	\ar[d]^-{b c}
\\
X	\ar[r]_-{f_0}	&	Y	\ar[r]_-{\id{Y}}	&	Y
}\]
But, since by hypothesis $(f_0,f_1)$ is \fibepic{p}, also $g=g'$.

\noindent (ii)$\Implies$(i)
Let $(k,g),(k,g') : b \to c$ be such that $(f_0,f_1)(k,g) = (f_0,f_1)(k,g')$.
Then the commutative diagram
\[\xymatrix@=3em{
A	\ar[d]_-{f_1} \ar[r]^-h	&	C	\ar[d]^-c
\\
B	\ar[r]_-{kb} \ar@<.5ex>[ur]^-g \ar@<-.5ex>[ur]_-{g'}
&	Z
}\]
exhibits $g$ and $g'$ as solutions to a lifting problem for $f_1$.
It follows that $g = g'$.
\end{proof}

Combining \lmx\ref{lem-strfs} with \thx\ref{mainthm}
we immediately obtain a sufficient condition for the fibration
$\cod\rst{\rightcl}$ to be elementary.

\begin{cor}\label{cor-strfs}
Let $(\leftcl,\rightcl)$ be a weak factorisation system on a category
\ct{C} with finite products such that \leftcl is closed under products
and under pullbacks along arrows in \rightcl.
If $(\leftcl,\rightcl)$ is an orthogonal factorisation system
\ie diagonal fillers are unique,
then the fibration $\cod\rst{\rightcl}$ is elementary.
\end{cor}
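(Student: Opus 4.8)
The plan is to verify condition \ref{E4} of \thx\ref{mainthm}, which characterises elementary fibrations among fibrations with products; note that $\cod\rst{\rightcl}$ does have products because \ct{C} has finite products. Of the two clauses of \ref{E4}, the first one, \ref{E4prdtrsp}, comes for free: as recalled at the opening of this section and proved in \xsx\ref{runex-trsp}(b), the assumptions that \ct{C} has finite products, that \leftcl is closed under products, and that \leftcl is closed under pullbacks along arrows in \rightcl together guarantee that $\cod\rst{\rightcl}$ has strict productive transporters. So the whole task reduces to checking the second clause \ref{E4pepi}, namely that every arrow in $\cocaDpcl{X}$ is \fibepic{\cod\rst{\rightcl}}.

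To handle \ref{E4pepi} I would fix an arrow $(f_0,f_1)$ of $\cocaDpcl{X}$ and exploit the observation recorded immediately before the statement: the component $f_1$ lies in \leftcl, since it arises as a pullback, along an arrow of \rightcl, of a product involving the map $\refl{X}$ (which is in \leftcl by construction), and \leftcl is closed under both products and such pullbacks. With this in hand I would apply \lmx\ref{lem-strfs} to the full subcategory $\arrcl{F} \colon= \rightcl$; this is legitimate because the right class of a weak factorisation system is closed under pullback, so \rightcl satisfies the hypothesis of that lemma. The lemma then translates the question of whether $(f_0,f_1)$ is \fibepic{\cod\rst{\rightcl}} into the purely combinatorial statement that every left lifting problem of $f_1$ against arrows of \rightcl has at most one solution.

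It is exactly here that orthogonality enters. Since $(\leftcl,\rightcl)$ is assumed orthogonal, diagonal fillers are unique; and as $f_1 \in \leftcl$, any commutative square posing $f_1$ on the left against an arrow of \rightcl on the right has a unique filler, in particular at most one. Thus clause (ii) of \lmx\ref{lem-strfs} holds, so $(f_0,f_1)$ is \fibepic{\cod\rst{\rightcl}}, establishing \ref{E4pepi}. Combined with the transporters from the first step, this verifies \thx\ref{mainthm}\ref{E4}, and therefore $\cod\rst{\rightcl}$ is elementary.

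I do not anticipate a genuine obstacle, which matches the fact that the conclusion is meant to follow by merely combining the two previous results. The one step deserving attention is the membership $f_1 \in \leftcl$ for arrows of $\cocaDpcl{X}$: it is precisely this that places $f_1$ as the left leg of the lifting problems in \lmx\ref{lem-strfs}, so that uniqueness of orthogonal fillers can be brought to bear. Everything else is bookkeeping across the equivalences of \thx\ref{mainthm}.
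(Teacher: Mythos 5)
Your proof is correct and takes essentially the same route as the paper: the paper also verifies \thx\ref{mainthm}\ref{E4} by taking the strict productive transporters and the membership $f_1 \in \leftcl$ for arrows of $\cocaDpcl{X}$ from the discussion at the start of Section~\ref{Applications} (building on \xsx\ref{runex-trsp}(b)), and then uses \lmx\ref{lem-strfs} together with uniqueness of orthogonal fillers to obtain condition \ref{E4pepi}. The paper merely leaves this combination implicit (``we immediately obtain''), whereas you spell out the steps, including the needed observation that \rightcl is pullback-stable so that \lmx\ref{lem-strfs} applies.
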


\begin{rem}\label{intror}
\lmx\ref{lem-strfs}
provides also a way to find examples of non-elementary
fibrations among those of the form $\cod\rst{\rightcl}$
for a weak factorisation system $(\leftcl,\rightcl)$
as in \crx\ref{cor-strfs} and \xmx\ref{runex-trsp}(b).
Indeed, a cocartesian arrow $(\pr{1,1},r):\id{X}\to\eqobj{X}$
would give rise to a factorisation of the diagonal
$\pr{1,1}:X \to X \times X$
as the arrow $r$ followed by $\eqobj{X} \in \rightcl$.
It follows that $r$ is a retract of $\refl{X}$,
and thus it is in \leftcl.
As $(\pr{1,1},r)$ is in particular \fibepic{\cod\rst{\rightcl}},
the arrow $r$ would have unique solutions to lifting problems.
Hence, as soon as there is an object $X$ such that
no \leftcl-map in a factorisation of $\pr{1,1}:X \to X \times X$
has unique solutions to lifting problems,
$\cod\rst{\rightcl}$ cannot be elementary.
\end{rem}

The situation is however different if,
instead of looking at the fibration associated to
the full subcategory \rightcl of $\ct{C}\twr$,
one looks at the fibration associated to a non-full subcategory of
\rightcl. We present one such situation.

Recall from~\cite{GrandisTholen}, see also~\cite{BourkeGarner},
that an \dfn{\awfs} $(\awfsL,\awfsM,\awfsR)$
on a category \ct{C} consists of
functors $\awfsM:\ct{C}\twr\ftr\ct{C}$,
$\awfsR:\ct{C}\twr\to\ct{C}\twr$, and
$\awfsL:\ct{C}\twr\to\ct{C}\twr$
giving rise to a functorial factorisation 
\[
\xymatrix@C=4em{
A	\ar[r]^-{\awfsL f} \ar@/_1em/[rr]_-{f}
&	\awfsM f	\ar[r]^-{\awfsR f}
&	B,	}
\]
and suitable monad and comonad structures on \awfsR and \awfsL
respectively, together with a distributivity law between them.
Let \RAlg be the category of algebras for the monad on \awfsR
and let \RMap be the category of algebras for the pointed endofunctor
on \awfsR. Similarly, let \LcAlg and \LMap be the categories of
coalgebras for the comonad on \awfsL and the pointed endofunctor on
\awfsL, respectively. When \ct{C} has finite limits, the two forgetful
functors
\[\xymatrix@1@C=3em{\RAlg\ar[rd]_(.45){\FTR{S}}\ar[r]^-{U'}&
\RMap\ar[d]^-{\FTR{N}}\ar[r]^-U&
\ct{C}\twr\ar[ld]^(.45){\cod}\\
&\ct{C}}\]
are homomorphisms of fibrations with finite products.

The category \Gpd of groupoids admits an \awfs
$(\awfsLg,\awfsMg,\awfsRg)$ 
such that the fibrations of \RgAlg and \RgMap over \Gpd are equivalent 
to the fibrations of split cloven isofibrations and of
normal cloven isofibrations, respectively, with arrows the commutative
squares that preserve the cleavage strictly,
see~\cite[Section 3]{GambinoLarrea2019}. 
Similar calculations show that also the category of small categories \Cat
admits an \awfs $(\awfsLc,\awfsMc,\awfsRc)$
such that the fibrations of \RcAlg and \RcMap over \Cat are equivalent
to the categories of split cloven isofibrations and of
normal cloven isofibrations, respectively, each with arrows the
homomorphisms of fibrations that preserve the cleavage on the nose.

The underlying weak factorisation systems
$(\leftcl,\rightcl)$ and $(\leftcl',\rightcl')$
of the two \awfs{s} are the (acyclic cofibrations, fibrations) weak
factorisation systems 
of the canonical, aka ``folk'', Quillen model structures on
\Cat and \Gpd, respectively.
As discussed in \rmx\ref{intror}, the associated fibrations
$\cod\rst{\rightcl}$ and $\cod\rst{\rightcl'}$
are not elementary.
On the other hand, the fibration
$\FTR{S}:\RcAlg\ftr\Cat$ is elementary as we shall see promptly.
From this it will also follow that the fibration
$\RgAlg\ftr\Gpd$ is elementary, and we shall be able to comment about
the other two obtained with the pointed endofunctor.

For a functor $F:\sct{A}\to\sct{B}$ between small
categories, $\awfsMc F$ is the category
whose objects are pairs $(A,x:B\isoa FA)$
where $A$ is an object in \sct{A} and $x$ is an iso in \sct{B},
and whose arrows $(b,a):(A,x:B\isoa FA)\to(A',x':B'\isoa FA')$
are pairs of an arrow $b:B\to B'$ in \sct{B} and an arrow $a:A \to A'$
in \sct{A} such that the square
\[\xymatrix{B\ar[d]_-b\ar[r]^-x&FA\ar[d]^-{Fa}\\
B'\ar[r]^-{x'}&FA'}\]
commutes.
Denote $\mathsf{i}_{\sct{B}}:\isosct{B}\to\sct{B}\twr$ the embedding
of the full subcategory \isosct{B} of $\sct{B}\twr$ on the isos.
Write 
\[\xymatrix@C=4em@R=3em{
\isosct{B}	\ar@<.5ex>[rr]^-{\mathsf{i}_{\sct{B}}}
		\ar@<-1.5ex>[dr]_-{\sdom{B}}
		\ar@<1.1ex>[dr]^-{\scod{B}}
&&	\sct{B}\twr	\ar@<-1.3ex>[ld]_-{\dom}
			\ar@<1.5ex>[ld]^-{\cod}
\\
&	\sct{B}	\ar@<-.2ex>[ur]|-{\idd{}\strut}
		\ar@<.2ex>[ul]|-{\strut\srefl{B}}
&}\]
the restrictions to \isosct{B} of the three structural functors.
Note that $\awfsMc F$ appears in the pullback of categories and functors
\begin{equation}\label{pulm}
\vcenter{\xymatrix@C=4em@R=2em{\awfsMc F\ar[r]^-{F'}\ar[d]_-{\scod{B}'}
&\isosct{B}\ar[d]^{\scod{B}}\\
\sct{A}\ar[r]^-{F}&\sct{B}}}
\end{equation}
and the functorial factorisation is obtained directly from it:
\[\xymatrix@C=4em@R=2em{
\sct{A}\ar@/_7pt/[rdd]_-{\Id{\sct{A}}}\ar[r]^-{F}
\ar[rd]|-(.6){\strut\awfsLc F}
&\sct{B}\ar[rd]^(.55){\srefl{B}}\ar@/^10pt/[rrd]^-{\Id{\sct{B}}}\\
&\awfsMc F\ar[r]^-{F'}\ar[d]_(.36){\,\scod{B}'}
\ar@/_20pt/[rr]|!{[r];[dr]}{\hole}_(.77){\awfsRc F}
&\isosct{B}\ar[d]_(.36){\scod{B}}\ar[r]^(.45){\sdom{B}}&\sct{B}\\
&\sct{A}\ar[r]^-{F}&\sct{B}.}\]
The factorisation extends to an \awfs on \Cat:
for the comonad the component at $F$ of the counit  is
\[\xymatrix{\sct{A}\ar[d]_-{\awfsLc F}\ar[r]^-{\Id{\sct{A}}}&
\sct{A}\ar[d]^-{F}\\
\awfsMc F\ar[r]^-{\awfsRc F}&\sct{B}}\]
while (the bottom component of) that of the comultiplication
$\awfsLc\tnat\awfsLc\awfsLc$ is
\[\Delta_F(A,x:B\isoa FA)=
(A,(x,\id{A}):(A,(A,x:B\isoa FA))\isoa(\awfsLc F)A)\]
---from here onward we leave out the definition of a functor on
arrows when it is obvious.
The component at $F$ of the unit of the monad is
\[\xymatrix{\sct{A}\ar[r]^-{\awfsLc F}\ar[d]_-{F}&
\awfsMc F\ar[d]^-{\awfsRc F}\\
\sct{B}\ar[r]^-{\sid{B}}&\sct{B}}\]
and (the top component of) that of the
multiplication
$\awfsRc\awfsRc\tnat\awfsRc$ is 
\[\mu_F((A,x:B\isoa FA),x':B'\isoa B)=(A,xx':B'\isoa FA).\]
The required distributivity law follows from the identities
\[(\awfsRc \awfsLc F)\circ\Delta_F = \id{\awfsMc F}
= \mu_F\circ(\awfsLc \awfsRc F),\]
see~\cite[2.2]{BourkeGarner}.

\begin{prp}\label{prel}
The fibration $\FTR{S}:\RcAlg\ftr\Cat$ is elementary.
\end{prp}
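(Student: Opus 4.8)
The plan is to apply \thx\ref{mainthm}, verifying condition \ref{E4} for $\FTR{S}$. Recall that an $\awfsRc$-algebra over a small category $\sct{B}$ is precisely a split cloven isofibration with codomain $\sct{B}$, that a vertical arrow of \RcAlg is a functor over $\sct{B}$ preserving the chosen splitting on the nose, and that $\FTR{S}$ is a fibration with finite products, as recorded above. For each $\sct{B}$ I would take as loop object $\eqobj{\sct{B}}$ the split cloven isofibration $(\sdom{B},\scod{B}):\isosct{B}\to\sct{B}\times\sct{B}$, whose fibre over a pair $(c_0,c_1)$ is the set of isomorphisms $c_0\isoa c_1$ and whose splitting transports an iso $x:c_0\isoa c_1$ along a base iso $(f_0,f_1)$ to $f_1 x f_0^{-1}$; this is exactly the object produced by factoring the diagonal $\pr{1,1}:\sct{B}\to\sct{B}\times\sct{B}$ with the \awfs, \ie the value of $\awfsMc$ at $\pr{1,1}$. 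The loop $\cocD{\sct{B}}:\trmf{\sct{B}}\to\eqobj{\sct{B}}$ is the reflexivity functor $\srefl{B}$, sending an object $b$ to \id{b}; it lies over $\pr{1,1}$ and preserves the splitting.

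For the transporters, given a split cloven isofibration $a:\sct{A}\to\sct{B}$ over $\sct{B}$, an object of $\desd{\sct{B}}{a}=(\ridx{\pr1}a)\land\eqobj{\sct{B}}$ lying over $(c_0,c_1)$ is a pair consisting of an object $\alpha$ of $\sct{A}$ with $a(\alpha)=c_0$ together with an iso $x:c_0\isoa c_1$. I would define the carrier $\trsp{a}:\desd{\sct{B}}{a}\to a$, which has to lie over $\pr2$, to send $(\alpha,x)$ to the codomain of the splitting-lift of $x$ at $\alpha$; strictness $\trsp{a}\cocaD{a}=\id{a}$ is immediate from the normalisation of the splitting, since the lift of an identity iso is an identity. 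Productivity is witnessed by the canonical isomorphism $\isosct{B}\times\isosct{C}\cong\mathrm{Iso}(\sct{B}\times\sct{C})$ identifying an isomorphism of a product with a pair of isomorphisms; this gives $\chi_{\sct{B},\sct{C}}$, and strict productivity holds because identity isos pair to identity isos. This settles \ref{E4prdtrsp}.

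The heart of the argument, and the step I expect to be the main obstacle, is condition \ref{E4pepi}: every arrow in $\cocaDpcl{\sct{B}}=\BCFRcl{\pr{1,1}}{\pr1}(\cocD{\sct{B}})$ must be locally epic with respect to $\FTR{S}$ over \gdiagcl. This is precisely the condition that fails for the \emph{full} subfibration $\cod\rst{\rightcl}$, as explained in \rmx\ref{intror}, and its verification is exactly where the non-fullness of \RcAlg---the strict preservation of the splitting by its arrows---enters. Concretely, a typical such arrow is $\cocaDp{Y}{A}:A\to\desdp{\sct{B}}{A}$ for $A$ over $\sct{Y}\times\sct{B}$, whose image consists of the reflexivity data $(\alpha,\id{c})$. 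Given vertical arrows $\psi,\psi'$ with $\psi\cocaDp{Y}{A}=\psi'\cocaDp{Y}{A}$, I would observe that an arbitrary object $(\alpha,x:c_0\isoa c_1)$ of the domain is the splitting-lift of the reflexivity object $(\alpha,\id{c_0})$ along the base iso that is $x$ on the last factor and the identity elsewhere; since $\psi$ and $\psi'$ preserve the splitting and agree on every reflexivity object, they must coincide on $(\alpha,x)$, hence be equal. The same computation applies to every arrow of $\cocaDpcl{\sct{B}}$, giving \ref{E4pepi}.

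With both halves of condition \ref{E4} in hand, \thx\ref{mainthm} yields that $\FTR{S}:\RcAlg\ftr\Cat$ is elementary. I would close by remarking that the analogous statement for the groupoid fibration $\RgAlg\ftr\Gpd$ then follows along the same lines, for instance by transporting the structure through a product-preserving functor as in \prx\ref{cob}.
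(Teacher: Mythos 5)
Your route is the paper's own: verify condition \ref{E4} of \thx\ref{mainthm}, with $\isosct{B}$ (the $\awfsMc$-factorisation of the diagonal) as loop object, reflexivity $\srefl{B}$ as the loop, transport along isos as carrier, and strict preservation of cleavages to get the \fibepic{\FTR{S}} condition. But there is a genuine gap, and it sits exactly at the point that makes the proposition true. You define the carrier only on objects (``send $(\alpha,x)$ to the codomain of the splitting-lift'') and never verify that it is an arrow of \RcAlg at all, i.e.\ a functor that \emph{strictly preserves the splittings}. This is not a routine formality: it is the one place where the monad algebra law $S(S(A,x),x') = S(A,xx')$ (splitness, as opposed to mere normality) is needed, cf.\ diagram~(\ref{dhom}) in the paper's \lmx\ref{plem}. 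A litmus test makes the gap visible: every step you actually wrote --- normality gives strictness of the carrier, cleavage-preservation of $\psi,\psi'$ gives agreement on transports of reflexivity objects --- applies verbatim to the fibration $\FTR{N}:\RcMap\ftr\Cat$ of \emph{normal} cloven isofibrations, which the paper proves is \emph{not} elementary (the corollary following \lmx\ref{plem}: for a merely normal cleavage the transport functor fails to be cleavage-preserving, so no carrier exists). Since your argument, as written, would ``prove'' that \RcMap is elementary too, it cannot be complete; the missing verification is precisely where it breaks.

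A second, smaller gap is in your treatment of \ref{E4pepi}: from $\psi\cocaDp{Y}{A}=\psi'\cocaDp{Y}{A}$ you establish that $\psi$ and $\psi'$ agree on every \emph{object} of $\desdp{X}{A}$ and conclude ``hence be equal'', but functors agreeing on objects need not be equal. This is repairable in your concrete style --- every morphism of $\desdp{X}{A}$ is a conjugate, by cleavage isomorphisms, of a morphism between reflexivity objects; morphisms between reflexivity objects lie in the image of $\cocaDp{Y}{A}$; and the cleavage isomorphisms are preserved by $\psi$ and $\psi'$ --- but it must be said. The paper sidesteps the issue by an identity between functors, $G = S\,\awfsMc(\sid{},G\ple{\sid{},\srefl{B}\pr2F})K$, obtained from a section $K$ of the structure map, which settles objects and arrows simultaneously. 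Finally, note that the paper's \lmx\ref{plem} proves the carrier is \emph{unique}; your proposition does not need uniqueness, but the subsequent corollary on \RcMap does, so if you intend your argument to support that corollary as well, existence alone will not suffice.
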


\begin{proof}
We shall make good use \thx\ref{mainthm} checking that the fibration
$\FTR{S}$ verifies condition \ref{E4}.
To construct a transporter on the small category \sct{B}
 consider first the functor 
$\ple{\scod{B},\sdom{B}}:\isosct{B}\to\sct{B}\times\sct{B}$
together with the structure  map $\sstr{B}$
defined by\footnote{This choice provides a stable functorial choice
of path objects in the sense
of~\cite[Definition~2.8]{GambinoLarrea2019}.}
\[\xymatrix@C=4em@R=.3ex{
(y:B_1\xyisoa B_2,(b_2,b_1):(B'_2,B'_1)\xyisoa(B_2,B_1))
\ar@{|->}[r]
&B'_1 \xrightarrow{b_2^{-1}yb_1} B'_2
\\
\awfsMc\ple{\scod{B},\sdom{B}}
\ar[r]^-{\sstr{B}}\ar[dddd]_{\awfsRc{\ple{\scod{B},\sdom{B}}}}
&\isosct{B}\ar[dddd]^{\ple{\scod{B},\sdom{B}}}\\ \\ \\ \\
\sct{B}\times\sct{B}\ar[r]^{\Id{\sct{B}\times\sct{B}}}&
\sct{B}\times\sct{B}.
}\]
Then to provide a loop on \sct{B} it is enough to show that the pair
$(\pr{1,1},\srefl{B})$ is a morphism from the algebra
$(\sid{B},\awfsRc\sid{B})$ to the algebra
$(\ple{\scod{B},\sdom{B}},\sstr{B})$, which is an easy diagram chase
in
\[\xymatrix@C=6em{
\awfsMc\sid{B}	\ar[d]_-{\awfsRc \sid{B}}
				\ar[r]^-{\awfsMc(\pr{1,1},\srefl{B})}
&	\awfsMc\ple{\scod{B},\sdom{B}}	\ar[d]^-{\sstr{B}}
\\
\sct{B}	\ar[r]^-{\srefl{B}}	&	\isosct{B}.
}\]
The construction of carriers is postponed to \lmx\ref{plem}.
But note that, once carriers are determined, transporters will be
strictly productive as the iso
\[\ple{\isosct{\pr{\mathrm{1}}},\isosct{\pr{\mathrm{2}}}}:
\isosct{B \times C} \cong \isosct{B} \times \isosct{C}\]
is clearly a morphism of algebras.

Finally, to see that morphisms in
$\BCFRcl{\pr{1,1}}{\pr 1}(\pr{1,1},\srefl{B})$ are \fibepic{\FTR{S}},
consider an algebra
$(\sct{A}\xrightarrow{F}\sct{I\times B},S)$;
write $D:\sct{A}\times_{\sct{B}}\isosct{B}\to\sct{I\times B \times B}$
for the underlying functor of
$(\ridx{\pr{1,2}}F) \land (\ridx{\pr{2,3}}\ple{\scod{B},\sdom{B}})$
and let $T:\awfsMc D \to \sct{A}\times_{\sct{B}}\isosct{B}$ be its
structure map,
which maps an object $((A,x),(i,b_2,b_1):(I,B_2,B_1)\isoa(FA,B))$
to $(S(A,(i,b_2)),b_2^{-1}xb_1)$.
Note that there is a functor
$K:\sct{A}\times_{\sct{B}}\isosct{B}\to\awfsMc(\pr{1,2,2}F)$
mapping an iso $x:B\isoa\pr2FA$ to
\[
(A,(\id{FA},x):(FA,B)\isoa\pr{1,2,2}FA)
\]
and that the composite
$\awfsMc(\sid{I\times B\times B},\ple{\sid{A},\srefl{B}\pr2F})K:
\sct{A}\times_{\sct{B}}\isosct{B}\to\awfsMc D$,
is a section of the algebra structure map.
Then for every
vertical morphism
$G:(\ridx{\pr{1,2}}F) \land (\ridx{\pr{2,3}}\ple{\scod{B},\sdom{B}})\to(F,S)$,
it is
\[
G = G T \awfsMc(\sid{I\times B\times B},\ple{\sid{A},\srefl{B}\pr2F}) K
= S \awfsMc(\sid{I\times B\times B},G\ple{\sid{A},\srefl{B}\pr2F})K.
\]
As $\cocaDp{\sct{I}}{F} = (\pr{1,2,2},\ple{\sid{A},\srefl{B}\pr2F})$,
algebra morphisms out of $(\ple{\scod{B},\sdom{B}},\sstr{B})$
are determined by their precomposition with \cocaDp{\sct{I}}{F}.
\end{proof}

\begin{cor}
The fibration $\RgAlg\ftr\Gpd$ is elementary.
\end{cor}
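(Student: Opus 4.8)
The plan is to derive the corollary from \prx\ref{prel} by way of \prx\ref{cob}. First I would take the inclusion $J : \Gpd \to \Cat$ of groupoids into small categories. Since the product of two groupoids, formed in \Cat, is again a groupoid, $J$ preserves finite products, so \prx\ref{cob} applies and the pullback $J^{*}\FTR{S} : J^{*}\RcAlg \ftr \Gpd$ of the elementary fibration of \prx\ref{prel} is again elementary. One subtlety must be flagged at once: the fibre of $J^{*}\RcAlg$ over a groupoid \sct{B} consists of the split cloven isofibrations $\sct{A} \to \sct{B}$ with \emph{arbitrary} total category \sct{A}, whereas in $\RgAlg$ the total category is required to be a groupoid (over $\sct{B} = \mathbf{1}$ the first fibre is all of \Cat, the second only \Gpd). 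Because the factorisation on \Gpd is defined exactly as on \Cat---with $\awfsMc F$ having objects $(A, x : B \isoa FA)$---a \Cat-algebra both of whose categories are groupoids is precisely a \Gpd-algebra; hence $\RgAlg$ is identified with the full subfibration of $J^{*}\RcAlg$ on the objects with groupoid total category. So the task is not merely to invoke \prx\ref{cob}, but to show that the elementary structure of $J^{*}\RcAlg$ restricts to this subfibration.

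To this end I would verify condition \ref{E4} of \thx\ref{mainthm} for $\RgAlg$, by restricting the transporter produced in \prx\ref{prel}. The key point is that for a groupoid \sct{B} the equality object $\eqobj{\sct{B}} = \isosct{B}$ is itself a groupoid: its objects are the isos of \sct{B} and its arrows are commuting squares between them, which are invertible precisely because every arrow of \sct{B} is. Consequently $\eqobj{\sct{B}}$, the loop $\cocD{\sct{B}} = (\pr{1,1}, \srefl{B})$, and the carriers supplied by \lmx\ref{plem} are all objects and morphisms of $\RgAlg$, and the same holds for the structural maps $\ple{\scod{B}, \sdom{B}}$ and $\sstr{B}$ used there.

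It then remains to see that the equality left adjoints stay inside the subfibration, \ie that $\ladjD{Y,X}A = \desdp{X}{A}$ has groupoid total category whenever $A$ does. This is immediate, since $\desdp{X}{A}$ is assembled from $\eqobj{X}$ (a groupoid), from reindexings along functors of groupoids (computed as pullbacks, which preserve groupoids) and from fibrewise meets (again pullbacks); none of these operations leaves \Gpd. Thus the strict productive transporter of $J^{*}\RcAlg$ restricts to a strict productive transporter on $\RgAlg$, and condition \ref{E4pepi} is inherited without change: the arrows of $\cocaDpcl{\sct{B}}$ are literally the arrows $\cocaDp{Y}{A}$ already appearing in $J^{*}\RcAlg$, and being \fibepic{p} is only easier to satisfy after passing to a full subfibration, since fewer vertical maps need be tested. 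By \thx\ref{mainthm} the fibration $\RgAlg \ftr \Gpd$ is therefore elementary.

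The step I expect to be the real obstacle is precisely the one the abstract machinery does not handle on its own, namely the fibre mismatch noted above: \prx\ref{cob} yields the elementariness of $J^{*}\RcAlg$, but not of $\RgAlg$, and the content of the proof lies in checking that every object manufactured by the equality left adjoints, as well as every intermediate object in the transporter of \prx\ref{prel}, remains a groupoid. Since all of these are built by finite products, pullbacks, and the path-object $\isosct{B}$---each of which sends groupoids to groupoids---this closure does hold, but it is the part of the argument that genuinely uses the explicit shape of the construction in \prx\ref{prel} rather than the bare statement of \prx\ref{cob}.
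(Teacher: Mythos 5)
Your proposal is correct, and in outline it is the same argument as the paper's---transfer \prx\ref{prel} along the inclusion $\Gpd\ftr\Cat$ via \prx\ref{cob}---but it is substantially more careful than the paper's own proof, and the extra care addresses a real issue. The paper's proof simply asserts that $\RgAlg\ftr\Gpd$ \emph{is} a change of base of $\RcAlg\ftr\Cat$ along the embedding and then invokes \prx\ref{cob}. As you point out, that identification is not literally true: the fibre of the pullback fibration over a groupoid \sct{B} consists of \emph{all} $\awfsRc$-algebras with codomain \sct{B}, so over $\sct{B}=\mathbf{1}$ it is essentially all of \Cat (\eg it contains the walking arrow $\mathbf{2}\to\mathbf{1}$ with its unique algebra structure), whereas the fibre of $\RgAlg$ over $\mathbf{1}$ is \Gpd; since $\Cat$ and $\Gpd$ are not equivalent, the two fibrations cannot be identified. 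So $\RgAlg$ is only the full subfibration of the change of base on algebras with groupoid total category, and an argument is needed that elementariness restricts. Your repair is the right one and uses exactly the ingredients the paper has already set up: $\isosct{B}$ is a groupoid when \sct{B} is, reindexing and fibrewise meets are computed by pullbacks of categories, which preserve groupoids, so the strict productive transporters of \prx\ref{prel} (with the carriers of \lmx\ref{plem}) live in the subfibration; and condition \ref{E4pepi} is inherited because a full subfibration closed under the relevant cartesian lifts has the same arrows in $\cocaDpcl{\sct{B}}$ and fewer vertical arrows against which local epicness must be tested. Then \thx\ref{mainthm}\ref{E4} applies. In short, your proof and the paper's take the same route, but yours closes a genuine gap in the paper's two-line argument; what the paper's version buys is only brevity, at the price of asserting an identification of fibrations that does not hold as stated.
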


\begin{proof}
The algebraic weak factorisation system structure on \Gpd is obtained
pulling back that on \Cat along the embedding $\Gpd\ftr\Cat$. It
follows that $\RgAlg\ftr\Gpd$ is a change of base of $\RcAlg\ftr\Cat$
along the embedding $\Gpd\ftr\Cat$. Hence $\RgAlg\ftr\Gpd$ is
elementary by \prx\ref{cob}.
\end{proof}

\begin{lem}\label{plem}
Given an algebra $(F:\sct{A}\to\sct{B},S:\awfsMc F\to\sct{A})$ in
\RcAlg, there is exactly one carrier for the loop 
$(\pr{1,1},\srefl{B}):
(\sid{B},\awfsRc\sid{B})\to(\ple{\scod{B},\sdom{B}},\sstr{B})$ and
it is
$(\pr2,S):\ridx{\pr1}(F,S)\land(\ple{\scod{B},\sdom{B}},\sstr{B})\to F$.
\end{lem}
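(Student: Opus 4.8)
The plan is to unwind the two algebra structures in play and reduce the whole verification to the monad axioms for $(F,S)$. First I would make the domain explicit: computing the pullback that defines the fibrewise product over $\sct B\times\sct B$, the underlying category of $\ridx{\pr1}(F,S)\land(\ple{\scod B,\sdom B},\sstr B)$ is $\awfsMc F=\sct A\times_{\sct B}\isosct B$, whose objects are the pairs $(A,x:B'\isoa FA)$ sitting over $(FA,B')$. Writing $D$ for its underlying functor, its structure map $T:\awfsMc D\to\sct A\times_{\sct B}\isosct B$ is obtained by specialising to a trivial parameter the map $T$ already computed in the proof of \prx\ref{prel}, namely $T((A,x),(z_1,z_2))=(S(A,z_1),z_1^{-1}xz_2)$. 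Because $F\circ S=\awfsRc F$ agrees with $D$ followed by $\pr2$, the pair $(\pr2,S)$ is an arrow over $\pr2:\sct B\times\sct B\to\sct B$ with the asserted domain and codomain.

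Next I would verify that $(\pr2,S)$ is a morphism in \RcAlg, \ie that $S\circ T=S\circ\awfsMc(\pr2,S)$ as functors on $\awfsMc D$. Evaluating on an object $((A,x),(z_1,z_2))$, the left-hand side is $S(S(A,z_1),z_1^{-1}xz_2)$; since $\awfsMc(\pr2,S)$ carries this object to $(S(A,x),z_2)$, the right-hand side is $S(S(A,x),z_2)$. By the associativity axiom $S(S(A,u),v)=S(A,uv)$ of the algebra, both expressions equal $S(A,xz_2)$, so the identity holds. This reduction to associativity is the computational heart of the lemma and the step I expect to be the main obstacle, since it requires threading the several isomorphisms of $\sct B$ through $T$ and the monad multiplication with care. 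Strictness is then immediate: on underlying functors $\trsp{(F,S)}\cocaD{(F,S)}$ is $S\circ\awfsLc F$, which equals $\Id{\sct A}$ by the unit axiom $S(A,\id{FA})=A$.

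For uniqueness I would invoke the fact---established while checking condition \ref{E4pepi} in the proof of \prx\ref{prel}---that the reflexivity insertion $\cocaD{(F,S)}$, which lies in $\cocaDpcl{\sct B}$ because it arises from the loop by a \FRop, is \fibepic{\FTR{S}}. Since every carrier is an arrow over $\pr2$, a carrier is determined by the composite $\trsp{(F,S)}\cocaD{(F,S)}$; for $(\pr2,S)$ this composite is $\Id{(F,S)}$ by the strictness just checked, so any carrier with the same composite coincides with $(\pr2,S)$, giving the uniqueness.
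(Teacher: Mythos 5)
Your existence and strictness checks follow the same route as the paper, only spelled out: like the paper, you identify the underlying functor of $\ridx{\pr1}(F,S)\land(\ple{\scod{B},\sdom{B}},\sstr{B})$ with the diagonal $D:\awfsMc F\to\sct{B}\times\sct{B}$ of the defining pullback, and its structure map (the paper's $S_D$, which you call $T$) with the specialisation of the formula from \prx\ref{prel}; your reduction of the homomorphism square for $(\pr2,S)$ to the associativity axiom $S(S(A,u),v)=S(A,uv)$, and of strictness to the unit axiom $S\awfsLc F=\sid{A}$, is precisely the verification that the paper dismisses as ``straightforward'' (the action on arrows being routine). Where you genuinely depart from the paper is uniqueness. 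The paper's argument is self-contained: it precomposes the homomorphism condition (\ref{dhom}) with an auxiliary arrow $H:\awfsMc(\awfsRc F)\to\awfsMc D$ and combines it with the strictness diagram (\ref{dstr}), the multiplication $\mu_F$ and a triangular identity of the monad to force $T=S$. You instead import from the proof of \prx\ref{prel} the fact that arrows of $\cocaDpcl{\sct{B}}$ are \fibepic{\FTR{S}}, so that two carriers over $\pr2$ that agree after precomposition with $\cocaD{(F,S)}$ must be equal. This is correct and shorter, but it makes the logical architecture delicate: \prx\ref{prel} defers the construction of carriers to this very lemma, so your proof avoids circularity only because the paragraph of that proof verifying condition \ref{E4pepi} nowhere uses carriers --- a point you must state explicitly. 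What the paper's longer chase buys is independence from \prx\ref{prel}, and it isolates the conceptual fact exploited immediately afterwards in the corollary on \RcMap: the homomorphism and strictness conditions force the carrier's functor part to be the structure map $S$ itself, so carriers exist exactly when the algebra underlies a monad algebra.

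One calibration, which applies to you and to the paper alike: both uniqueness arguments invoke strictness --- yours when you restrict to ``any carrier with the same composite'', the paper's through diagram (\ref{dstr}). Hence both prove that $(\pr2,S)$ is the unique \emph{strict} carrier, whereas the statement literally asserts uniqueness among all carriers; that stronger claim is in fact not available (over $\sct{B}=\ct{1}$ the homomorphism condition trivialises, so every endofunctor of a discrete $\sct{A}$ yields a carrier, of which only the identity is strict). So on this point your proof is exactly at parity with the paper's, neither weaker nor stronger.
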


\begin{proof}
We may assume, without loss of generality, that 
the underlying functor of the algebra
$\ridx{\pr1}(F,S)\land(\ple{\scod{B},\sdom{B}},\sstr{B})$
is the diagonal $D: \awfsMc F \to \sct{B} \times\sct{B}$
in the pullback of categories and functors
\[\xymatrix@C=4em@R=3em{
\awfsMc F\ar[r]^{F'}	\ar[d]_{\ple{\scod{B}',\awfsRc F}}
						\ar[rd]_-D
&	\isosct{B}\ar[d]^{\ple{\scod{B},\sdom{B}}}\\
\sct{A}\times\sct{B}\ar[r]_-{F\times\sid{B}}&
\sct{B}\times\sct{B}}\]
with the notation of diagram (\ref{pulm}).
The structure map $S_D : \awfsMc D \to \to \awfsMc F$
is induced by those on $F$ and \ple{\scod{B},\sdom{B}} and maps
a pair $(A, x:B\isoa FA), (b_1,b_2):(B_1,B_2)\isoa(FA,B)$ to
the pair $S(A,b_1),b_1^{-1}xb_2:B_2\isoa B_1$.
A functor $T:\awfsMc F\to\sct{A}$ in the second component of the
carrier has to fit in the commutative diagram
\[\xymatrix{
\awfsMc F	\ar[d]_-D \ar[r]^-{T}
&	\sct{A}\ar[d]^-{F}
\\
\sct{B\times B}	\ar[r]^-{\pr2}	&	\sct{B}
}\]
and, since it has to be a homomorphism of algebras,
the following diagram must commute
\begin{equation}\label{dhom}
\vcenter{\xymatrix@C=5.5em{
\awfsMc D	\ar[r]^-{\awfsMc(\pr2,T)} \ar[d]_-{S_D}
&	\awfsMc F	\ar[d]^-{S}
\\
\awfsMc F	\ar[r]^-{T}	&	\sct{A}.
}}
\end{equation}
Moreover, the strictness condition imposes that the diagram
\begin{equation}\label{dstr}
\vcenter{\xymatrix@C=3em{
\sct{A}\ar[d]_-{\awfsLc F}\ar[rd]^-{\sid{A}}\\
\awfsMc F\ar[r]^(.49){T}&\sct{A}}}
\end{equation}
commutes.
Note also that there is an arrow $H:\awfsMc (\awfsRc F) \to \awfsMc D$
such that $S_D H = \mu_F$ and $\awfsMc(\pr2,T)H = \awfsMc(\sid{B},T)$.
Precomposing diagram (\ref{dhom}) with $H$
and using (\ref{dstr}) together with a
triangular identity for the monad, the commutative diagram
\[\xymatrix@C=2.5em{
\awfsMc F	\ar[rd]|-{\qquad\awfsMc(\sid{B},\awfsLc F)}
			\ar@/^18pt/[rrrd]^{\sid{\awfsMc F}}
			\ar@/_15pt/[rdd]_{\sid{\awfsMc F}}\\
&	\awfsMc(\awfsRc F)	\ar[rr]^-{\awfsMc(\sid{B},T)} \ar[d]^-{\mu_F}
&&	\awfsMc F	\ar[d]^-{S}
\\
&	\awfsMc F	\ar[rr]^-{T}	&&	\sct{A}.
}\]
shows that the only possible choice for $T$ is the structural 
functor $S:\awfsMc F\to\sct{A}$, and it is straightforward to see that
that choice makes diagrams (\ref{dhom}) and (\ref{dstr}) commute.
\end{proof}

\begin{cor}
The fibrations $\RcMap\ftr\Cat$ and $\RgMap\ftr\Gpd$ are not
elementary.
\end{cor}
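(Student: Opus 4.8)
The plan is to argue by contradiction through the characterisation of \thx\ref{mainthm}, exploiting the contrast with \prx\ref{prel}: elementariness of $\FTR{N}\colon\RcMap\ftr\Cat$ would, by condition \ref{E4}, equip \emph{every} object of $\RcMap$ over every base with a strict carrier, whereas the objects of $\RcMap$ include normal cloven isofibrations that are not split. So first I would suppose $\FTR{N}$ elementary and read off, for a small category $\sct X$, the loop $\cocD{\sct X}\colon\trmf{\sct X}\to\eqobj{\sct X}$ as a cocartesian lift of $\pr{1,1}$ at the terminal object $\trmf{\sct X}$ (the unit isofibration $\sid X$). Since cocartesian lifts are determined up to isomorphism and $\RcAlg$ is a full subfibration of $\RcMap$ in which, by \prx\ref{prel}, the split path object $\isosct{X}$ is the cocartesian lift of $\pr{1,1}$, I would identify the underlying functor of $\eqobj{\sct X}$ with $\ple{\scod X,\sdom X}\colon\isosct{X}\to\sct X\times\sct X$. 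Transport along this loop is then literally a lifting of isomorphisms in $\sct X$.

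The heart of the argument is that the \emph{strict productive} carrier data forces this lifting to be strictly functorial, that is, a split cleavage. Concretely, I would unfold condition \ref{E4prdtrsp} through the parametrised carriers of \prx\ref{partrsp}: a strict carrier $\trsp{A}$ together with its parametrised versions provides, for an object $A=(F,S)$ over $\sct X$, a transport along the generic path which restricts to the identity on reflexivity (strictness) and, by the proof of \lmx\ref{plem} read in reverse, must be compatible with concatenation of paths. On the underlying isofibration this compatibility is exactly the associativity law distinguishing a monad algebra from a mere pointed--endofunctor algebra; in other words, a strict carrier for $A$ upgrades the normal cleavage $S$ to a split one. Unlike the full-subfibration situation of \rmx\ref{intror}, where the obstruction was uniqueness of diagonal fillers, here both $\eqobj{\sct X}$ and $A$ already carry algebraic structure, and the obstruction is precisely this associativity. (If one prefers to avoid identifying the carrier with $S$, the uniqueness step of \lmx\ref{plem} fails for non-split $A$, producing two distinct carriers and hence violating the clause \ref{E4pepi} that arrows in $\cocaDpcl{\sct X}$ be \fibepic{\FTR{N}}; either route contradicts \ref{E4}.)

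To finish I would exhibit a witness. Take $\sct X=\sct H$ and let $A$ be the surjection $q\colon\sct G\to\sct H$ of one-object groupoids determined by the quotient $\mathbb Z/4\twoheadrightarrow\mathbb Z/2$, equipped with the normal cleavage that lifts the nontrivial element of $\mathbb Z/2$ to a generator of $\mathbb Z/4$. This is an object of $\RgMap$, and \emph{a fortiori} of $\RcMap$ since groupoids are small categories, and it admits no split cleavage, because lifting the nontrivial element twice can never return the identity. By the previous paragraph this contradicts the existence of a strict transporter, so $\FTR{N}\colon\RcMap\ftr\Cat$ is not elementary; the same groupoid witness, being an object of $\RgMap$, shows that $\RgMap\ftr\Gpd$ is not elementary either.

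The main obstacle is the middle step: rigorously extracting the split-cleavage (associativity) law from the strict productive transporter axioms, and in particular pinning down the loop object as the canonical path groupoid $\isosct{X}$ so that \emph{transport} and \emph{cleavage} denote the same data. Verifying that the productivity and parametrised-strictness clauses, and not merely the one-variable strictness $\trsp{A}\cocaD{A}=\id{A}$, are what enforce functoriality under concatenation will require care; the non-split groupoid $q$ is designed precisely so that this functoriality is impossible.
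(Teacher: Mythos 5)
Your overall strategy---contradiction through \thx\ref{mainthm}\ref{E4}, the associativity obstruction, and an explicit non-split witness---is in the spirit of the paper's argument, and two ingredients are genuinely right: a strict carrier for the loop $(\pr{1,1},\srefl{X})$ forces the normal cleavage to satisfy the monad-algebra (associativity) law, which is exactly how the paper uses \lmx\ref{plem}; and $\mathbb{Z}/4\twoheadrightarrow\mathbb{Z}/2$ is a correct witness of a normal cloven isofibration admitting no split cleavage (the paper leaves this witness implicit). The genuine gap is the step you yourself call the heart: identifying $\eqobj{\sct X}$ with $\isosct{X}$. Uniqueness of cocartesian lifts up to isomorphism holds \emph{within a single fibration}. \prx\ref{prel} establishes that $\srefl{X}$ is cocartesian in $\FTR{S}:\RcAlg\ftr\Cat$, i.e.\ that maps from $\trmf{\sct X}$ into \emph{split} algebras factor uniquely through $\isosct{X}$. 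Cocartesianness in $\FTR{N}:\RcMap\ftr\Cat$ is a strictly stronger property: the factorisations must exist against arbitrary normal cloven isofibrations, and this is precisely what fails. A cleavage-preserving vertical map out of $(\ple{\scod{X},\sdom{X}},\sstr{X})$ must send the object $b_2^{-1}b_1$ to the transport of reflexivity along the pair $(b_2,b_1)$; since many pairs have the same composite, its existence forces instances of the associativity law in the target, which fail in non-split targets (one can build a small normal cloven isofibration over $\sct X\times\sct X$, with a map from $\trmf{\sct X}$ over the diagonal, that witnesses this). So being cocartesian in the full subfibration \RcAlg does not make $\srefl{X}$ cocartesian in \RcMap, the identification of $\eqobj{\sct X}$ with $\isosct{X}$ is not available, and the rest of your argument, which transports the entire carrier question onto $\isosct{X}$, does not go through as written.

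Note how the paper's proof is arranged to avoid exactly this claim: it never asserts that an elementary structure on \RcMap would have $\isosct{X}$ as its equality object. It only observes that $U'(\ple{\scod{X},\sdom{X}},\sstr{X})$ is in any case \emph{a} loop in \RcMap, and applies \lmx\ref{plem} to it: any strict carrier at an algebra $(F,S)$ is forced---using only strictness, the homomorphism condition and the monad's triangle identity, not associativity of $S$---to be $(\pr2,S)$, and $(\pr2,S)$ is a morphism of algebras precisely when $S$ is associative; hence this loop admits no carrier at a non-split algebra. Your parenthetical fallback is also incorrect on this point: for non-split $(F,S)$ the uniqueness part of \lmx\ref{plem} still pins any would-be carrier down to $(\pr2,S)$, so what fails is \emph{existence} (there are zero carriers), not uniqueness (two carriers), and no violation of \ref{E4pepi} can be extracted that way. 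Finally, be aware that what a fully watertight proof needs---both in your version and when reading the paper's---is a reason why \emph{no} loop whatsoever, not merely the one inherited from \RcAlg along $U'$, can underlie a strict productive transporter satisfying \ref{E4}; your identification step is an attempt to supply exactly that reason, but it is the one point that cannot be settled by citing the universal property established in \RcAlg.
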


\begin{proof}
The forgetful fibered functor $U':\RcAlg\ftr\RcMap$ takes loops to
loops. But, by \lmx\ref{plem}, a transporter for an algebra in \RcAlg
exists if and only if the algebra underlies an algebra for the monad.
The same argument applies to the fibration $\RgMap\ftr\Gpd$.
\end{proof}

\begin{rem}
Note that the argument in the proof of \prx\ref{prel} which shows that
arrows in $\BCFRcl{\pr{1,1}}{\pr1}(\pr{1,1},\srefl{B})$ are
\fibepic{\FTR{S}:\RcAlg\ftr\Cat}
 can be repeated to show that the same arrows are
\fibepic{\FTR{N}:\RcMap\ftr\Cat}.
Also, since the forgetful fibered functor
$U':\RcAlg\ftr\RcMap$ preserves finite products,
$\FTR{N}:\RcMap\to\Cat$ has the underlying arrows to ensure strictly
productive transporters.
\end{rem}

\section*{Acknowledgments}
We would like to thank Richard Garner, Martin Hyland, and John Power
for comments, suggestions and support in the early stages of this
work.

\bibliographystyle{alpha}
\bibliography{biblioEPR}

\begin{thebibliography}{{Mar}84}

\bibitem[AW09]{AwodeyWarren2009}
S.~{Awodey} and M.A. {Warren}.
\newblock {Homotopy theoretic models of identity types.}
\newblock {\em Math.\ Proc.\ Cambridge Philos.\ Soc.}, 146(1):45--55, 2009.

\bibitem[Ber18]{vdBerg2018a}
B.~van~den Berg.
\newblock Path categories and propositional identity types.
\newblock {\em ACM Trans.\ Comput.\ Logic}, 19(2):15:1--32, 2018.

\bibitem[BG12]{vdBergGarner2012}
B.~van~den Berg and R.~Garner.
\newblock Topological and simplicial models of identity types.
\newblock {\em ACM Trans.\ Comput.\ Logic}, 13(1):3:1--44, 2012.

\bibitem[BG16]{BourkeGarner}
J.~Bourke and R.~Garner.
\newblock Algebraic weak factorisation systems {I}: {A}ccessible {AWFS}.
\newblock {\em J. Pure Appl. Algebra}, 220(1):108--147, 2016.

\bibitem[BM18]{vdBergMoerdijk2018}
B.~van~den Berg and I.~Moerdijk.
\newblock {Exact completion of path categories and algebraic set theory: Part
  I: Exact completion of path categories}.
\newblock {\em J. Pure Appl. Algebra}, 222(10):3137 -- 3181, 2018.

\bibitem[EPR20a]{EmmeneggerJ:eledac}
J.~Emmenegger, F.~Pasquali, and G.~Rosolini.
\newblock Elementary doctrines as coalgebras.
\newblock To appear in {\it J.~Pure and Appl.~Algebra}, 2020.

\bibitem[EPR20b]{EmmeneggerJ:elefog}
J.~Emmenegger, F.~Pasquali, and G.~Rosolini.
\newblock Elementary fibrations of enriched groupoids.
\newblock {Submitted}, 2020.

\bibitem[{Gar}08]{Garner2009}
R.~{Garner}.
\newblock {Understanding the Small Object Argument}.
\newblock {\em Appl. Categ. Structures}, 17(3):247--285, 2008.

\bibitem[GG08]{GambinoGarner2008}
N.\ {Gambino} and R.\ {Garner}.
\newblock {The identity type weak factorisation system}.
\newblock {\em {Theor. Comput. Sci.}}, 409(1):94--109, 2008.

\bibitem[GL19]{GambinoLarrea2019}
N.~Gambino and M.~F. Larrea.
\newblock Models of {M}artin-{L\"of} type theory from algebraic weak
  factorisation systems.
\newblock available at {\texttt{arXiv:1906.01491, 2019}}, 2019.

\bibitem[GS17]{GambinoSattler2017}
N.~Gambino and C.~Sattler.
\newblock {The Frobenius condition, right properness, and uniform fibrations}.
\newblock {\em J. Pure Appl. Algebra}, 221(12):3027--3068, 2017.

\bibitem[GT06]{GrandisTholen}
M.~Grandis and W.~Tholen.
\newblock Natural weak factorization systems.
\newblock {\em Arch. Math. (Brno)}, 42(4):397--408, 2006.

\bibitem[HS98]{HofmannStreicher1998}
M.~Hofmann and T.~Streicher.
\newblock The groupoid interpretation of type theory.
\newblock In {\em Twenty-five years of constructive type theory {$($Venice,
  1995$)$}}, volume~36 of {\em Oxford Logic Guides}, pages 83--111. Oxford
  Univ. Press, New York, 1998.

\bibitem[Jac99]{JacobsB:catltt}
B.~Jacobs.
\newblock {\em Categorical {Logic and Type Theory}}, volume 141 of {\em Studies
  in Logic and the foundations of mathematics}.
\newblock North Holland Publishing Company, 1999.

\bibitem[Joy17]{Joyal2017}
A.~Joyal.
\newblock Notes on clans and tribes.
\newblock Manuscript, available at {\texttt{arXiv:1710.10238}}, 2017.

\bibitem[KL12]{KapulkinLumsdaine2012}
K.~Kapulkin and P.L. Lumsdaine.
\newblock {The Simplicial Model of Univalent Foundations (after Voevodsky)}.
\newblock To appear in \emph{J. Eur. Math. Soc.}. Available as arXiv:1211.2851,
  2012.

\bibitem[Law69]{LawvereF:adjif}
F.~W. Lawvere.
\newblock Adjointness in foundations.
\newblock {\em Dialectica}, 23:281--296, 1969.

\bibitem[Law70]{LawvereF:equhcs}
F.W. Lawvere.
\newblock Equality in hyperdoctrines and comprehension schema as an adjoint
  functor.
\newblock In A.~Heller, editor, {\em Proc. {N}ew {Y}ork {S}ymposium on
  {A}pplication of {C}ategorical {A}lgebra}, pages 1--14. Amer.{M}ath.{S}oc.,
  1970.

\bibitem[{Mar}75]{MartinLofP:inttot}
P.~{Martin-L{\"o}f}.
\newblock An intuitionistic theory of types: predicative part.
\newblock In {\em Logic {C}olloquium '73 ({B}ristol, 1973)}, pages 73--118.
  Studies in Logic and the Foundations of Mathematics, Vol. 80. Elsevier, 1975.

\bibitem[{Mar}82]{MartinLofP:conmcp}
P.~{Martin-L\"of}.
\newblock Constructive mathematics and computer programming.
\newblock In {L. Jonathan Cohen and Jerzy \L{}os\'s and Helmut Pfeiffer and
  Klaus-Peter Podewski}, editor, {\em Logic, Methodology and Philosophy of
  Science VI}, volume 104 of {\em Studies in Logic and the Foundations of
  Mathematics}, pages 153 -- 175. Elsevier, 1982.

\bibitem[{Mar}84]{ML84}
P.~{Martin-L\"{o}f}.
\newblock {\em Intuitionistic Type Theory}.
\newblock Bibliopolis, Naples, 1984.
\newblock Notes by {G.~S}ambin of a series of lectures given in {P}adua, {J}une
  1980.

\bibitem[NPS90]{PMTT}
B.~{N}ordstr{\"o}m, K.~{P}etersson, and J.~{S}mith.
\newblock {\em Programming in Martin L{\"o}f's Type Theory.}
\newblock Clarendon Press, Oxford, 1990.

\bibitem[{Qui}67]{Quillen}
D.G. {Quillen}.
\newblock {Homotopical algebra.}
\newblock {Lecture Notes in Mathematics. 43. Springer-Verlag,
  Berlin-Heidelberg-New York. VI}, 1967.

\bibitem[Sco70]{ScottD:conv}
D.S. Scott.
\newblock Constructive validity.
\newblock In {\em Symposium on {A}utomatic {D}emonstration ({V}ersailles,
  1968)}, Lecture Notes in Mathematics, Vol. 125, pages 237--275. Springer,
  Berlin, 1970.

\bibitem[Shu15]{Shulman2015}
M.~Shulman.
\newblock Univalence for inverse diagrams and homotopy canonicity.
\newblock {\em Math. Structures Comput. Sci.}, 25(5):1203--1277, 2015.

\bibitem[Str20]{Streicher:fibc}
T.~Streicher.
\newblock Fibred {C}ategories {\`a} la {J}ean {B\'e}nabou.
\newblock available at {\texttt{arXiv:1801.02927v7}}, 2020.

\end{thebibliography}

\end{document}